\numberwithin{equation}{section}
\newcommand{\bbm}{\begin{bmatrix}}
\newcommand{\ebm}{\end{bmatrix}}
\newcommand{\bv}{\begin{vmatrix}}
\newcommand{\ev}{\end{vmatrix}}
\newcommand{\g}{\mathfrak{g}}
\newcommand{\n}{\mathfrak{n}}
\newcommand{\h}{\mathfrak{h}}
\newcommand{\C}{\mathbb{C}}
\newcommand{\Z}{\mathbb{Z}}
\newcommand{\A}{\mathbb{A}}
\newcommand{\beq}{\begin{equation*}}
\newcommand{\eeq}{\end{equation*}}
\newcommand{\beqn}{\begin{eqnarray*}}
\newcommand{\eeqn}{\end{eqnarray*}}
\newcommand{\mf}{\mathfrak}
\newcommand{\mc}{\mathcal}
\newcommand{\bp}{\begin{pmatrix}}
\newcommand{\ep}{\end{pmatrix}}
\DeclareMathOperator{\End}{End}
\DeclareMathOperator{\Hom}{Hom}
\DeclareMathOperator{\im}{im }
\DeclareMathOperator{\Lie}{Lie}
\DeclareMathOperator{\Ext}{Ext}
\DeclareMathOperator{\SL}{SL}
\DeclareMathOperator{\coker}{coker}
\theoremstyle{plain}
\newtheorem{theorem}{Theorem}[section]
\newtheorem*{theorem*}{Theorem}
\theoremstyle{plain}
\theoremstyle{definition}
\newtheorem{definition}[theorem]{Definition}
\newtheorem{lemma}[theorem]{Lemma}
\theoremstyle{definition}
\theoremstyle{plain}
\theoremstyle{plain}
\theoremstyle{remark}
\newtheorem{remark}[theorem]{Remark}
\theoremstyle{definition}
\theoremstyle{definition}
\title{An example of the Jantzen filtration of a D-module}
\author{Simon Bohun and Anna Romanov}
\date{}
\begin{document}

\maketitle

\begin{abstract}
    We compute the Jantzen filtration of a $\mc{D}$-module on the flag variety of $\SL_2(\C)$. At each step in the computation, we illustrate the $\mf{sl}_2(\C)$-module structure on global sections to give an algebraic picture of this geometric computation. We conclude by showing that the Jantzen filtration on the $\mc{D}$-module agrees with the algebraic Jantzen filtration on its global sections, demonstrating a famous theorem of Beilinson--Bernstein. 
\end{abstract}
\tableofcontents
\section{Introduction}
\label{introduction}

\subsection{Overview}
\label{sec: overview}

Jantzen filtrations arise in many situations in representation theory. The Jantzen filtration of a Verma module over a semisimple Lie algebra provides information on characters (the Jantzen sum formula) \cite{Jantzen}, and gives representation-theoretic significance to coefficients of Kazhdan--Lusztig polynomials (the Jantzen conjectures) \cite{BBJantzen}. The Jantzen filtration of a Weyl module over a reductive algebraic group of positive characteristic is a helpful tool in the notoriously difficult problem of determining irreducible characters \cite{Jantzen}. Jantzen filtrations also play a critical role in the unitary algorithm of \cite{ALTV}, which determines the irreducible unitary representations of a real reductive group. 

Though the utility of Jantzen filtrations in applications is primarily algebraic (providing information about characters or multiplicities of representations), establishing deep properties of the Jantzen filtration usually requires a geometric incarnation due to Beilinson--Bernstein. In \cite{BBJantzen}, Beilinson--Bernstein introduce a $\mc{D}$-module version of the Jantzen filtration, which provides them with powerful geometric tools to analyze its structure. The constructions in \cite{BBJantzen} require technical and deep machinery in the theory of $\mc{D}$-modules, and as such, may not be easily accessible to a reader unfamiliar with this geometric approach to representation theory. However, the persistent utility of Beilinson--Bernstein's results indicates that the geometric Jantzen filtration is a critical tool. 

In our experience, it is often enlightening, insightful, and non-trivial to describe a difficult construction in a simple example. This is the purpose of this paper: to illustrate the construction of Beilinson--Bernstein in the simplest non-trivial example. In doing this, we include simplified proofs of Beilinson--Bernstein's results for the Lie algebra $\mf{sl}_2(\C)$ and detailed computations which do not appear in the original paper. 

The main contribution of our example is to provide algebraic insight into a fundamental geometric construction. Beilinson--Bernstein localisation is a powerful bridge between representation theory and algebraic geometry which has provided geometric proofs of several important algebraic theorems. This strategy of using geometric tools to approach algebraic problems is effective, but it has a drawback --- without deep knowledge of the geometry involved, the algebraist using these results is left without a sense of what is happening under the hood, and as a result, geometric results are often used as black boxes. 

Our approach in this paper is to shine light into the black box by providing a series of algebraic snapshots of a geometric computation. We do this by computing the global sections of the $\mc{D}$-modules which arise at each step in the computation and illustrating the corresponding $\mf{sl}_2(\C)$-representations. Here we mean ``illustrate'' in the most literal sense --- we include eight figures in which we draw precise pictures of these representations. Our hope is that by giving a concrete visual description, we are able to provide readers with algebraic intuition for the general construction.

This paper is concerned with the example of $\SL_2(\C)$. However, some amount of general theory will be helpful to set the scene. We dedicate the remainder of the introduction to  orienting the reader with the necessary general theory.

\subsection{The algebraic Jantzen filtration}
\label{sec: the algebraic Jantzen filtration intro}

 Let $\mf{g}\supset \mf{b} \supset \mf{h}$ be a complex semisimple Lie algebra, a Borel subalgebra, and a Cartan subalgebra. Denote by $\mf{n} = [\mf{b},\mf{b}]$ the nilradical of $\mf{b}$, and $\overline{\mf{b}}$ the opposite Borel subalgebra. Given a weight $\lambda \in \mf{h}^*$, let $M(\lambda) = \mc{U}(\mf{g}) \otimes_{\mc{U}(\mf{b})} \C_\lambda$ be the corresponding Verma module, $I(\lambda)$ the corresponding dual Verma module (defined to be the direct sum of the weight spaces in the $\g$-module $\Hom_{\mc{U}(\overline{\mf{b}})}(\mc{U}(\mf{g}), \C_\lambda)$), and 
\[
\psi: M(\lambda) \rightarrow I(\lambda).
\]
the canonical $\mf{g}$-module homomorphism from $M(\lambda)$ to $I(\lambda)$.

The algebraic Jantzen filtration of $M(\lambda)$ involves a deformation of the above set-up in a specified direction $\gamma \in \h^*$. The deformation is constructed as follows.
Given $\gamma \in \h^*$, let $T=\mc{O}(\C \gamma)$ be the ring of regular functions on the line $\C \gamma \subset \h^*$. This can be identified with a polynomial ring $\C[s]$. Denote by $A=T_{(s)}$ the local ring of $T$ at the prime ideal $(s)$. 

We use the ring $A$ to construct the corresponding {\em deformed Verma module}, defined to be the $(\mf{g},A)$-bimodule
\[
M_A(\lambda) := \mc{U}(\mf{g}) \otimes_{\mc{U}(\mf{b})} A_\lambda,
\]
where $A_\lambda=A$ is the $(\h, A)$ bimodule given by 
\begin{equation}
    \label{eq: deformation in direction gamma}
    h \cdot a = (\lambda(h) + \gamma(h)s)a
\end{equation}
for $h \in \h$, $a \in A$, extended trivially to $U(\mf{b})$. Equation \eqref{eq: deformation in direction gamma} demonstrates that $M_A(\lambda)$ is a ``deformation of $M(\lambda)$ in the direction $\gamma$''. 

Similarly, the {\em deformed dual Verma module} $I_A(\lambda)$ is defined to be the sum of deformed weight spaces (see \eqref{eq: deformed weight space}) in the $(\mf{g},A)$-bimodule
\[
\Hom_{\mc{U}(\overline{\mf{b}})}(\mc{U}(\mf{g}), A_\lambda).
\]
There is a canonical $(\mf{g},A)$-module homomorphism 
\begin{equation}
    \label{eq: canonical hom deformed intro}
    \psi_A: M_A(\lambda) \rightarrow I_A(\lambda).
\end{equation}
Setting $s=0$ recovers the usual Verma and dual Verma modules, and the canonical morphism $\psi$. 

The $A$-submodules $s^iM_A(\lambda)$ and $s^iI_A(\lambda)$ are $\mf{g}$-stable for all $i$, so both $M_A(\lambda)$ and $I_A(\lambda)$ have $(\mf{g},A)$-module filtrations given by powers of $s$. The Jantzen filtration of $M_A(\lambda)$ is the filtration obtained by pulling back the filtration of $I_A(\lambda)$ by powers of $s$ along the canonical homomorphism $\psi_A$ \eqref{eq: canonical hom deformed intro}. Setting $s=0$ recovers a filtration of $M(\lambda)$. This is the {\em algebraic Jantzen filtration}\footnote{Analogous constructions yield Jantzen filtrations of the Weyl modules and principal series representations mentioned in \S\ref{sec: overview} \cite{Jantzen, BBJantzen}. Because we focus on Verma modules in our example, we will not define these other Jantzen filtrations precisely.} of the Verma module $M(\lambda)$. 

\begin{remark}
    (Computability of Jantzen filtration) The algebraic Jantzen filtration is traditionally formulated in terms of a contravariant form, which explicitly realises the canonical map between $M(\lambda)$ and $I(\lambda)$. See, for example, \cite{Shapovalov, Jantzen}. This explicit realisation makes the filtration directly computable, which is useful in applications. In contrast, other important representation-theoretic filtrations, such as composition series, are known to exist, but are much more difficult to compute algorithmically. 
\end{remark}

For $\mf{g} = \mf{sl}_2(\C)$, the Jantzen filtration coincides with the composition series, as our computations in \S\ref{sec: example} illustrate. However, for larger Lie algebras (already starting at $\mf{sl}_3(\C)$), the Jantzen filtration differs from the composition series, and  carries fundamental information about Verma modules and related representations. Jantzen conjectured \cite[\S5.17]{Jantzen} that for $\gamma = \rho$ (the half-sum of positive roots) the Jantzen filtration satisfies the following properties: 
\begin{enumerate}
    \item Embeddings of Verma modules $M(\mu) \hookrightarrow M(\lambda)$ are strict for Jantzen filtrations. 
    \item The Jantzen filtration coincides with the socle filtration. In particular, the filtration layers are semisimple.
\end{enumerate}

Subsequent work by Barbasch \cite{Barbasch}, Gabber-Joseph \cite{GJ}, and others revealed that Jantzen's conjectures have deep consequences. In particular, Jantzen's conjectures imply a stronger version of Kazhdan--Lusztig's famous conjecture on composition series multiplicities of Verma modules \cite{KL}: multiplicities of simple modules in layers of the Jantzen filtration are given by coefficients of a corresponding Kazhdan--Lusztig polynomial. 

Kazhdan--Lusztig's original multiplicity conjecture was proven by Beilinson--Bernstein in \cite{BB81} using $\mc{D}$-module techniques. A proof of Jantzen's conjectures did not appear until 12 year later in \cite{BBJantzen}, using a significant extension of the geometric techniques used in \cite{BB81}. In the following section, we outline their approach.

\begin{remark}
    (Algebraic proof of Jantzen's conjectures)
    In \cite{Williamson}, Williamson provided an alternate proof of Jantzen's conjectures using Soergel bimodule techniques, following previous work of Soergel and K{\"u}bel \cite{Soergel08, Kubel1, Kubel2}. Williamson's proof holds for Verma modules, whereas Beilinson--Bernstein's proof also holds for more general Harish-Chandra modules. 
\end{remark}

\begin{remark}
\label{rem: deformation direction algebraic}
    (Deformation direction) The definition of the algebraic Jantzen filtration relies on a choice of deformation direction $\gamma \in \h^*$, which also has a geometric manifestation in Beilinson--Bernstein's construction. It is clear from the definitions that this direction should be non-degenerate; i.e. that it should not lie on any root hyperplanes. However, it was a long-standing problem (raised in \cite{BBJantzen}) as to whether the deformation direction need be dominant. Williamson showed in \cite{Williamson} that it does, giving examples of non-dominant deformation directions resulting in different filtrations for Lie algebras as small as  $\mf{g}=\mf{sl}_4(\C)$.
\end{remark}

\subsection{The geometric Jantzen filtration}
\label{sec: The geometric Jantzen filtration}

Beilinson--Bernstein's approach to the Jantzen conjectures is to relate the algebraic Jantzen filtration to a natural geometric filtration on the corresponding $\mc{D}$-module under Beilinson--Bernstein localisation. They then argue that this {\em geometric Jantzen filtration} coincides with the weight filtration on the $\mc{D}$-module, providing them access to powerful techniques in weight theory. In this section, we outline Beilinson--Bernstein's construction. More details can be found in \cite{BBJantzen}. 

\subsubsection{Monodromy filtrations}
\label{sec: monodromy filtrations}
Geometric Jantzen filtrations are intimately related to monodromy filtrations. Given an object $A$ in an abelian category $\mc{A}$ and a nilpotent endomorphism $s \in \End_{\mc{A}}(A)$, the {\em monodromy filtration} of $A$ is defined to be the unique increasing exhaustive filtration $\mu^\bullet$ on $A$ such that $s\mu^n \subset \mu^{n-2}$, and for $k \in \mathbb{N}$, $s^k$ induces an isomorphism $\mathrm{gr}_\mu^k A \simeq \mathrm{gr}_\mu^{-k} A.$ 

The monodromy filtration of $A$ induces a filtration $J_{!}^\bullet$ on $\ker s $ and a filtration $J_+^\bullet$ on $\coker s$ in the natural way. Moreover, on $\ker s$ and $\coker s $, the monodromy filtration can be described explicitly in terms of powers of $s$.  Namely,  
\begin{equation}
    \label{eq: geometric Jantzen filtration as powers of s} J_!^i = \ker s \cap \im s^{-i} \text{ and } J_+^i = (\ker s^{i+1} + \im s) / \im s, 
\end{equation}
where it is taken that $\im s^i = A$ for $i \leq 0$ and $\ker s^i=0$ for $i\leq 0$ \cite[\S4.1]{BBJantzen}. (See also \cite[\S1.6]{Del}.) 

\subsubsection{Geometric Jantzen filtrations}
\label{sec: geometric Jantzen filtrations}
Certain $\mc{D}$-modules come equipped with nilpotent endomorphisms, and thus acquire monodromy filtrations. In particular, the {\em maximal extension functor}\footnote{See \S \ref{sec: maximal extension} for the precise definition of this functor.} provides a recipe for constructing $\mc{D}$-modules with nilpotent endomorphisms from $\mc{D}$-modules on open subvarieties using a deformation procedure. 

More precisely, if $Y$ is a smooth algebraic variety with a fixed regular function $f: Y \rightarrow \mathbb{A}^1$, the maximal extension $\Xi_f \mc{M}_U$ of a holonomic $\mc{D}_U$-module $\mc{M}_U$ on $U=f^{-1}(\mathbb{A}^1 - \{0\})$ is constructed by deforming $\mc{M}_U$ by the ring $\C[s]/s^n$ using the function $f$, then pushing forward the deformed $\mc{M}_U$ along the inclusion map $j:U \hookrightarrow Y$. The resulting $\mc{D}_Y$-module is an object in the abelian category of holonomic $\mc{D}_Y$-modules which has a natural nilpotent endomorphism $s$ arising from the deformation of $\mc{M}_U$. Hence it has a monodromy filtration. 

The construction of the maximal extension functor guarantees that 
\[
\ker(s: \Xi_f \mc{M}_U \rightarrow \Xi_f \mc{M}_U) = j_! \mc{M}_U
\]
and
\[
\coker(s: \Xi_f \mc{M}_U \rightarrow \Xi_f \mc{M}_U)=j_+ \mc{M}_U,
\] 
so the (non-deformed) $!$-standard and $+$-standard $\mc{D}_Y$-modules $j_! \mc{M}_U$ and $j_+ \mc{M}_U$ appear as sub and quotient modules of the maximal extension $\Xi_f \mc{M}_U$ \cite[Lemma 4.2.1]{BBJantzen}. In this way, we obtain filtrations of the $\mc{D}_Y$-modules $j_! \mc{M}_U$ and $j_+ \mc{M}_U$ from the monodromy filtration of $\Xi_f \mc{M}_U$. These are the {\em geometric Jantzen filtrations}. 

Note that analogously to the algebraic Jantzen filtration, the geometric Jantzen filtration  depends on a choice of deformation parameter, given by the regular function $f:Y \rightarrow \A^1$. Moreover, the explicit realisation \eqref{eq: geometric Jantzen filtration as powers of s} in terms of powers of $s$ means that like the algebraic Jantzen filtration, the geometric Jantzen filtration is explicitly computable.

\subsubsection{Geometric Jantzen filtrations on Harish-Chandra sheaves}
The $\mc{D}$-modules corresponding to Verma modules and dual Verma modules under Beilinson--Bernstein localisation can be made to fit into the framework of \S\ref{sec: geometric Jantzen filtrations}, and thus acquire geometric Jantzen filtrations. Such $\mc{D}$-modules manifest as {\em Harish-Chandra sheaves}, which are a class of $\mc{D}$-modules equivariant with respect to a certain group action. We explain this connection below.

Let $G$ be the simply connected semisimple Lie group associated to $\mf{g}$, $B\subset G$ the Borel subgroup corresponding to $\mf{b}$, and $N \subset B$ its unipotent radical. Set $H:=B/N$ to be the abstract maximal torus of $G$ \cite[Lemma 6.1.1]{chrissginzburg}, and identify $\mf{h}$ with $\Lie H$. Let $\widetilde{X}:=G/N$ be the base affine space and $X:=G/B$ the flag variety. The projection $\pi: \widetilde{X} \rightarrow X$ is a principal $G$-equivariant $H$-bundle with respect to the right action of $H$ on $\widetilde{X}$ by right multiplication. 

\begin{remark}
\label{rem: H-monodromic D-modules} 
($H$-monodromic $\mc{D}_{\widetilde{X}}$-modules) In \cite{BBJantzen}, Beilinson-Bernstein work with $H$-monodromic $\mc{D}$-modules on base affine space $\widetilde{X}$ instead of modules over sheaves of twisted differential operators (TDOs) on the flag variety $X$, as they do in \cite{BB81}. Working over $\widetilde{X}$ has several advantages: it allows one to study entire families of representations at once (see Figures \ref{fig: dual Vermas} and \ref{fig: Vermas} for an illustration of this phenomenon), and it allows one to study $\mf{g}$-modules with generalised infinitesimal character\footnote{A $U(\mf{g})$-module $M$ has {\em generalised infinitesimal character} $\chi: Z(\mf{g})\rightarrow \C$ if for all $z \in Z(\mf{g})$ and $m \in M$, $(z - \chi(z))^k m = 0$ for some $k \in \Z_{> 0}$. }. In contrast, modules over TDOs can only be used to study $\mf{g}$-modules with strict infinitesimal character. There is a precise relationship between $H$-monodromic $\mc{D}_{\widetilde{X}}$-modules and modules over TDOs, see Remark \ref{rem: relationship between D tilde and TDOs}. 
\end{remark}

For an $N$-orbit\footnote{Note that this construction works for many Harish-Chandra pairs $(\mf{g},K)$, not just the pair $(\mf{g},N)$. In \cite[\S3.4]{BBJantzen}, the specific conditions on $K$ necessary for such a construction to hold are discussed. In particular, these constructions can be applied to any symmetric pair \cite[Lemma 3.5.2]{BBJantzen}, so they can be used in the study of admissible representations of real reductive groups.}(i.e. a Bruhat cell) $Q$ in $X$, denote by $\widetilde{Q} = \pi^{-1}(Q)$ the corresponding union of $N$-orbits in $\widetilde{X}$. A choice of dominant regular integral weight $\gamma \in \mf{h}^*$ (the ``deformation direction'') determines a regular function $ f_\gamma: \overline{\widetilde{Q}}\rightarrow \mathbb{A}^1$ on the closure of $\widetilde{Q}$ such that $f_\gamma^{-1}(\mathbb{A}^1 - \{0\}) = \widetilde{Q}$ \cite[Lemma 3.5.1]{BBJantzen}. This function extends to a regular function on $\widetilde{X}$, which, by the process outlined in \S\ref{sec: geometric Jantzen filtrations}, determines a maximal extension functor $\Xi_{f_\gamma}:\mc{M}_{\mathrm{hol}}(\mc{D}_U) \rightarrow \mc{M}_{\mathrm{hol}}(\mc{D}_{\widetilde{X}})$. Here $U$ is the preimage in $\widetilde{X}$ of $\A^1 - \{0\}$ under the extension of $f_\gamma$. Restricting $\Xi_{f_\gamma}$ to the category of holonomic $\mc{D}_U$-modules supported on $\widetilde{Q}$ results in a functor
\[
\Xi_{f_\gamma}: \mc{M}_{\mathrm{hol}}(\mc{D}_{\widetilde{Q}}) \rightarrow \mc{M}_{\mathrm{hol}} (\mc{D}_{\overline{\widetilde{Q}}}).
\]

 Let $\mc{O}_{\widetilde{Q}}$ be the structure sheaf on $\widetilde{Q}$ and $j_{\widetilde{Q}}: \widetilde{Q} \hookrightarrow \overline{\widetilde{Q}}$ the inclusion of $\widetilde{Q}$ into its closure. Via the construction in \S\ref{sec: geometric Jantzen filtrations}, the modules $j_{\widetilde{Q}!}\mc{O}_{\widetilde{Q}}$ and $j_{\widetilde{Q}+}\mc{O}_{\widetilde{Q}}$ acquire from $\Xi_{f_\gamma}\mc{O}_{\widetilde{Q}}$ geometric Jantzen filtrations. Because $\overline{\widetilde{Q}}$ is closed in $\widetilde{X}$, a theorem of Kashiwara \cite[Theorem 12.6]{D-modulesnotes} allows one to lift these filtrations to filtrations of the standard $N$-equivariant $\mc{D}_{\widetilde{X}}$-modules $i_{\widetilde{Q}!} \mc{O}_{\widetilde{Q}}$ and $i_{\widetilde{Q}+} \mc{O}_{\widetilde{Q}}$, for $i_{\widetilde{Q}}: \widetilde{Q} \hookrightarrow \widetilde{X}$ the inclusion. 

 There is a natural map  
 \[
 \mc{U}(\mf{g}) \rightarrow \Gamma(\widetilde{X}, \mc{D}_{\widetilde{X}}),
 \]
 obtained by differentiating the $G$-action on $\widetilde{X}$ which endows global sections of $\mc{D}_{\widetilde{X}}$-modules with the structure of $\mc{U}(\mf{g})$-modules. In \S\ref{sec: the map}, we explicitly compute this map for $\mf{sl}_2(\C)$. As $\mc{U}(\mf{g})$-modules, the global sections of $i_{\widetilde{Q}!} \mc{O}_{\widetilde{Q}}$ and $i_{\widetilde{Q}+} \mc{O}_{\widetilde{Q}}$ are direct sums of all integral Verma modules and dual Verma modules, respectively. In \S\ref{sec: vermas and dual vermas}, we illustrate this structure in our example. 

 \begin{remark} 
 \label{rem: comparison to Rom21}
 (Comparison to \cite{Rom21})
     It is interesting to contrast the computations of the current paper to those in Romanov's previous paper \cite{Rom21}, whose goal was to illustrate four families of $\mc{D}$-modules corresponding to well-known families of representations (finite-dimensional, Verma/dual Verma, principal series, and Whittaker). Our approach in the current paper is to study all integral Verma/dual Verma modules simultaneously by working over base affine space, as explained above. In contrast, \cite[\S6]{Rom21} analyses Verma/dual Verma modules one at a time using modules over varying TDOs on the flag variety. (Compare Figures \ref{fig: dual Vermas} and \ref{fig: Vermas} below to Figures 2 and 3 in \cite{Rom21}.) Our techniques in this paper are not specific to Verma modules: by working over base affine space, we could recover each family of examples in \cite{Rom21} using a single $H$-monodromic $\mc{D}$-module. 
     
     Our current approach is not merely stylistic --- it is necessary for our goal. Because the deformed Verma modules arising in the construction of the Jantzen filtration do not have a strict infinitesimal character as Verma modules do, they cannot be studied as modules over TDOs on the flag variety. However, deformed Verma modules can be approximated by $\mf{g}$-modules with generalised infinitesimal character (see \S\ref{sec: step 1}, and, in particular, \eqref{eq: generalised casimir action} and \eqref{eq: Casimir action on deformed dual vermas}), so a $\mc{D}$-module approach to their study must necessarily work over $\widetilde{X}$ instead of $X$, see Remark \ref{rem: H-monodromic D-modules}.
 \end{remark}

\subsubsection{Relationship between monodromy and weight filtrations}
\label{ref: relationship between monodromy and weight filtrations}

The geometric Jantzen filtration of $i_{\widetilde{Q}!}\mc{O}_{\widetilde{Q}}$ constructed in the previous section is computable via \eqref{eq: geometric Jantzen filtration as powers of s}, but it is not clear that it should satisfy the properties of Jantzen's conjectures. The key idea of Beilinson--Bernstein's proof is to relate the monodromy filtration on $\Xi_{f_\gamma} \mc{O}_{\widetilde{Q}}$ to the {\em weight filtration} on the corresponding perverse sheaf under the Riemann--Hilbert correspondence, which has strong functoriality and semisimplicity properties. 

Weight filtrations on objects in derived categories of constructible $\mathbb{Q}_\ell$-sheaves\footnote{Beilinson--Bernstein's results could also be formulated in the more modern language of Saito's mixed Hodge modules \cite{Saito1, Saito2}, but because the initial draft of their paper was written in 1986 before Saito's work was published, they instead used the technology of mixed $\ell$-adic sheaves \cite{Del}.} are a deep generalisation of filtrations on cohomology rings of algebraic varieties. Explicitly constructing weight filtrations is extremely difficult outside of the most basic examples, but they can be shown to exist for complexes built from simple examples via sheaf functors. In particular, the perverse sheaf corresponding to the maximal extension $\Xi_{f_\gamma} \mc{O}_{\widetilde{Q}}$ admits a `mixed structure', and hence a weight filtration, as it is the quotient of a push-forward of a $\mc{D}$-module of `geometric origin'. 

Beilinson--Bernstein's strategy was to utilize a theorem of Gabber \cite[Theorem 5.1.2]{BBJantzen}, which establishes that on a perverse sheaf obtained by a nearby cycles functor (of which the maximal extension functor is a special instance), the monodromy filtration agrees with the weight filtration. Passing Gabber's theorem to $\mc{D}$-modules via the Riemann--Hilbert correspondence lets them conclude that the geometric Jantzen filtration on $i_{\widetilde{Q}!}\mc{O}_{\widetilde{Q}}$ agrees with the weight filtration.  

Weight filtrations have two important properties: (1) they are functorial with respect to morphisms of mixed perverse sheaves, and (2) the associated graded object is semisimple. These properties are exactly what is needed to prove Jantzen's conjectures: the functoriality implies the strictness of the Jantzen filtration with respect to embeddings of Verma modules, and the semisimplicity of the associated graded implies (with some additional pointwise purity arguments) the agreement of the Jantzen filtration with the socle filtration. 

The power of Beilinson--Bernstein's proof comes from the connection between two very different filtrations --- the Jantzen filtration, which is explicitly computable, but has no obvious structure, and the weight filtration, which is very difficult to compute, but satisfies remarkable properties. 

\subsection{Relationship between algebraic and geometric Jantzen filtrations}
\label{sec: relationship between algebraic and geometric Jantzen filtrations}
Beilinson--Bernstein's proof of Jantzen's conjectures relies on the fact that the geometric and algebraic Jantzen filtrations align under the global sections functor. Though both constructions involve similar ingredients, such as deformations and relationships between standard and costandard objects, it is not immediately obvious from the definitions that they should yield the same filtration on Verma modules. This crucial relationship is given minimal justification in \cite{BBJantzen}.

Because of the critical nature of this relationship, we dedicate the final section of our paper \S\ref{sec: relation to the algebraic Jantzen filtration} to explicitly describing the relationship between the two filtrations for $\mf{sl}_2(\C)$, and illustrating it for a fixed infinitesimal character in Figure \ref{fig: comparing filtrations}. Our arguments easily generalise to any Lie algebra.

\subsection{Structure of the paper}
\label{sec: structure of the paper}
\hspace{3mm}
The remainder of the paper is dedicated to the computation of the geometric Jantzen filtration for the Lie algebra $\mf{sl}_2(\C)$. The computation is structured as follows. 

\S2.1: We establish an algebra homomorphism from the extended universal enveloping algebra to global differential operators on base affine space. This algebra homomorphism is what allows us to view the global sections of $\mc{D}_{\widetilde{X}}$-modules as modules over the (extended) universal enveloping algebra. 

\S2.2: We give some background on $H$-monodromic $\mc{D}_X$-modules, and explain their relationship to modules over twisted sheaves of differential operators. 

\S2.3: We introduce the $\mc{D}_{\widetilde{X}}$-modules whose global sections contain Verma modules and dual Verma modules --- these are the $\mc{D}_{\widetilde{X}}$-modules which we will endow with geometric Jantzen filtrations. We illustrate the $\mf{sl}_2(\C)$-module structure on their global sections in Figures \ref{fig: dual Vermas} and \ref{fig: Vermas}. 

\S2.4: We introduce the maximal extension functor, which gives the deformation necessary for the Jantzen filtration. We compute the maximal extension of the structure sheaf on an open subset of $\widetilde{X}$, and illustrate in Figures \ref{fig: deformed dual verma} and \ref{fig: deformed verma} how deformed Verma modules and deformed dual Verma modules arise geometrically. We illustrate in Figures \ref{fig: layers of maximal extension} and \ref{fig: maximal extension} the global sections of the maximal extension, identifying them with the big projective in category $\mc{O}$. 

\S2.5: We define the geometric Jantzen filtration using monodromy filtrations. We compute the monodromy filtration of the maximal extension, and illustrate its global sections in Figure \ref{fig: global sections of monodromy}. This specialises to the geometric Jantzen filtration on certain sub- and quotient sheaves. 

\S2.6: We introduce the algebraic Jantzen filtration on a Verma module in \S\ref{sec: the algebraic jantzen filtration}, then explain why the global sections of the geometric Jantzen filtration align with the algebraic Jantzen filtration in \S \ref{sec: relationship between algebraic and geometric Jantzen filtrations}. Figure \ref{fig: comparing filtrations} illustrates this relationship in our example.

\subsection{Acknowledgements}
This paper arose from computations in the first author's honours thesis at the University of New South Wales. We would like to thank the referee for helpful suggestions which greatly improved the readability of the paper. The first author would like to thank Daniel Chan, who influenced his interests in this topic, and broadened his understanding of the various geometric techniques used in this paper. The second author would like to thank Jens Eberhardt, Adam Brown, and Geordie Williamson for many hours of conversations about Jantzen filtrations which contributed significantly to her understanding. 

\section{Example}
\label{sec: example}

Now we proceed with our example. For the remainder of this paper, set $G = \SL_2(\C)$, and fix subgroups
\[
 B= \Set{ \bp a & b \\ 0 & a^{-1} \ep | \,a\in\C^*,\,b\in\C }, \quad 
 N = \Set{ \bp 1 & b \\ 0 & 1 \ep | b \in \C  }
\]
and 
\[
H = \Set{ \bp a & 0 \\ 0 & a^{-1} \ep | a \in \C^* }. 
\]
Let $\g$, $\mf{b}$, $\mf{n}$, and $\h$ be the corresponding Lie algebras, and $\bar{\n}$ the opposite nilpotent subalgebra to $\mf{n}$. Denote by 
\begin{equation}
\label{eq: e, f, h}
e = \bp 0 & 1 \\ 0 & 0 \ep, 
\quad f = \bp 0 & 0 \\ 1 & 0 \ep, \quad \text{and} \quad h = \bp 1 & 0 \\ 0 & -1 \ep 
\end{equation}
 the standard basis elements of $\mf{g}$, so $\mf{n} = \C e$, $\mf{h} = \C h$ and $\bar{\n} = \C f$. Denote by $\mc{Z}(\mf{g})$ the center of the universal enveloping algebra $\mc{U}(\mf{g})$. The algebra $\mathcal{Z}(\mf{g})$ is generated by the Casimir element 
 \begin{equation}
    \label{eq: casimir}
    \Omega = h^2 + 2ef + 2fe.
\end{equation}
Let 
 \begin{equation}
     \label{eq: HC projection}
     \gamma_{\mathrm{HC}}: \mc{U}(\g) \rightarrow \mc{U}(\h)
 \end{equation}
 be the projection onto the first coordinate of the direct sum decomposition
 \[
\mc{U}(\mf{g}) = \mc{U}(\mf{h}) \oplus (\bar{\mf{n}} \mc{U}(\mf{g}) + \mc{U}(\mf{g})\mf{n}).
 \]
The restriction of $\gamma_{\mathrm{HC}}$ to $\mathcal{Z}(\g)$ is an algebra homomorphism. 

Set $X=G/B$ and $\widetilde{X} = G/N$. Then $X$ is the flag variety of $\mf{g}$, and we refer to $\widetilde{X}$ as {\em base affine space}. We identify  $X$ with the complex projective line $\C\mathbb{P}^1$ via 
\begin{equation}
\label{eq: flag variety is P1}
\begin{pmatrix}x_1&*\\x_2&*\end{pmatrix}B\mapsto(x_1:x_2),
\end{equation}
and $\widetilde{X}$ with $\C^2 \backslash \{(0,0)\}$ via 
\begin{equation}
\label{eq: base affine space is C2}
\begin{pmatrix}x_1&*\\x_2&*\end{pmatrix}N\mapsto(x_1,x_2).
\end{equation}

There are left actions of $G$ on $X$ and $\widetilde{X}$ by left multiplication. Under the identifications \eqref{eq: flag variety is P1} and \eqref{eq: base affine space is C2}, these actions are given by
\begin{equation}
\label{eq: G action on flag variety}
\begin{pmatrix}a&b\\c&d\end{pmatrix}\cdot(x_1:x_2)=(ax_1+bx_2:cx_1+dx_2)
\end{equation}
and 
\begin{equation}
\label{eq: G action on base affine space}
\begin{pmatrix}a&b\\c&d\end{pmatrix}\cdot(x_1,x_2)=(ax_1+bx_2,cx_1+dx_2).
\end{equation}

Because $H$ normalizes $N$, there is also a right action of $H$ on $G/N$ by right multiplication. Under the identification \eqref{eq: base affine space is C2}, this action is given by 
\begin{equation}
\label{eq: H action on base affine space}
(x_1,x_2)\cdot\begin{pmatrix}a&0\\0&a^{-1}\end{pmatrix}=(ax_1,ax_2).
\end{equation}
The natural $G$-equivariant quotient map 
\begin{equation}
    \label{eq: pi}
    \pi: \widetilde{X} \rightarrow X
\end{equation}
is an $H$-torsor over $X$. In the language of \cite[\S 2.5]{BBJantzen}, this provides an ``$H$-monodromic structure'' on $X$. 

For an algebraic variety $Y$, we denote by $\mc{O}_Y$ the structure sheaf on $Y$, and by $\mc{O}(Y) = \Gamma(Y, \mc{O}_Y)$ the algebra of global regular functions. We denote by $\mc{D}_Y$ the sheaf of differential operators on $Y$, and $\mc{D}(Y) = \Gamma(Y, \mc{D}_Y)$ the global differential operators. 

Base affine space $\widetilde{X}$ is a quasi-affine variety, with affine closure $\overline{\widetilde{X}} = \A^2$. Throughout this text, we will make use the following facts about quasi-affine varieties. Let $Y$ be an irreducible quasi-affine variety, openly embedded in an affine variety $\overline{Y}$. 
\begin{itemize}
    \item If $Y$ is normal with $\mathrm{codim}_{\overline{Y}}(\overline{Y} \backslash Y) \geq 2$, then $\mc{O}(Y) = \mc{O}(\overline{Y})$ and $\mc{D}(Y) =\mc{D}(\overline{Y})$ \cite[\S2]{LS06}. (In particular, for $Y = \widetilde{X}$, this implies that global differential operators are nothing more than the Weyl algebra in 2 variables\footnote{Outside of $\mf{sl}_2(\C)$, the situation is less straightforward. For a Lie algebra $\mf{g} \neq \mf{sl}_2(\C)^m$, the affine closure of the corresponding base affine space is singular, and the ring of global differential operators can be quite complicated, see, for example, \cite{LS06}.}.) 
    \item Because the variety $\overline{Y}$ is affine, it is also $D$-affine, meaning that the global sections functor induces an equivalence of categories between the category of quasi-coherent $\mc{D}_{\overline{Y}}$-modules and the category of modules over $\mc{D}(\overline{Y})$. 
    \item Since the inclusion $i: Y \rightarrow \overline{Y}$ is an open immersion, the restriction functor $i^+$ on the corresponding categories of $\mc{D}$-modules is exact, and commutes with pushforwards from open affine subvarieties \cite[Remark 3.1]{D-modulesnotes}. 
\end{itemize}
The facts listed above allow us to move freely between $\mc{D}_{\widetilde{X}}$-modules and $\mc{D}(\A^2)$-modules. We will do this periodically in computations. 

\subsection{The map $\mc{U}(\g) \otimes_{\mc{Z}(\g)} \mc{U}(\h) \rightarrow \Gamma(\widetilde{X}, \mc{D}_{\widetilde{X}})$}
\label{sec: the map}

 Our strategy for gaining intuition about the $\mc{D}_{\widetilde{X}}$-modules arising in the construction of the Jantzen filtration is to illustrate the $\mf{g}$-module structure on their global sections. This will give us an algebraic snapshot as to what is happening at each step in the construction sketched in Section \ref{sec: The geometric Jantzen filtration}. The first step is to differentiate the actions \eqref{eq: G action on base affine space} and \eqref{eq: H action on base affine space} to obtain a map $\mc{U}(\g) \otimes_{\mc{Z}(\g)} \mc{U}(\h) \rightarrow \Gamma(\widetilde{X}, \mc{D}_{\widetilde{X}})$. This map provides the $\mf{g}$-module structure on the global sections of $\mc{D}_{\widetilde{X}}$-modules. We dedicate this section to the computation of this map. 

By differentiating the left action of $G$ in \eqref{eq: G action on base affine space}, we obtain an algebra homomorphism 
\begin{equation}
    \label{eq: the map U(g)--> D}
    L:\mc{U}(\mf{g}) \rightarrow \Gamma(\widetilde{X}, \mc{D}_{\widetilde{X}}),\hspace{2mm} g \mapsto L_g
\end{equation}
given by the formula 
\begin{equation}
    \label{eq: differentiating left G-action}
    L_g f(x) = \frac{d}{dt} \bigg|_{t=0} f (\mathrm{exp}(tg)^{-1}x)
\end{equation}
for $g \in G$, $f \in \Gamma(\widetilde{X}, \mc{O}_{
\widetilde{X}})$, $x \in \widetilde{X}$. Computing the image of the basis \eqref{eq: e, f, h} under the homomorphism \eqref{eq: the map U(g)--> D} is straighforward. For example, the image of $e$ is given by the following computation using \eqref{eq: G action on base affine space}. 
\begin{align*}
    e \cdot f(x_1, x_2) &= \frac{d}{dt} \bigg|_{t=0} f \left( \bp 1 & -t \\ 0 & 1 \ep \cdot (x_1, x_2 ) \right) \\
    &= \frac{d}{dt} \bigg|_{t=0} f(x_1 - t x_2, x_2) \\
    &= - x_2 \partial_1 f(x_1, x_2).
\end{align*}
Similar computations determine the image of $f$ and $h$: 
\begin{equation}
    \label{eq: differential operators corresponding to e, f, h} 
    L_e = -x_2 \partial_1, \quad L_f = -x_1 \partial_2, \quad L_h = -x_1 \partial_1 + x_2 \partial_2.
\end{equation}
It is also useful to compute the image of the Casimir element \eqref{eq: casimir} under the homomorphism $L$:
\begin{equation}
    \label{eq: image of casimir under L}
    L_\Omega = x_1^2 \partial_1^2 + 3 x_1 \partial_1 + 3 x_2 \partial_2 + x_2^2 \partial_2^2 + 2x_1 x_2 \partial_1 \partial_2.
\end{equation}

Similarly, the right action of $H$ determines an algebra homomorphism 
\begin{equation}
    \label{eq: the map U(h) --> D}
    R: \mc{U}(\mf{h}) \rightarrow \Gamma(\widetilde{X}, \mc{D}_{\widetilde{X}}), \quad g \mapsto R_g.
\end{equation}
Under this homomorphism, $h$ is sent to the Euler operator 
\begin{equation}
    \label{eq: right action of h}
    R_h = x_1 \partial_1 + x_2 \partial_2.
\end{equation}

Combining the homomorphisms $L$ \eqref{eq: the map U(g)--> D} and $R$ \eqref{eq: the map U(h) --> D}, we obtain an algebra homomorphism
\begin{equation}
    \label{eq: the map U(g) tensor U(h) --> D}
    \mc{U}(\mf{g}) \otimes_\C \mc{U}(\mf{h}) \rightarrow \Gamma(\widetilde{X},\mc{D}_{\widetilde{X}}); \quad g \otimes g' \mapsto L_g R_{g'}
\end{equation}
\begin{lemma}
\label{lem: factors through the quotient tilde U}
    The homomorphism \eqref{eq: the map U(g) tensor U(h) --> D} factors through the quotient 
    \begin{equation}
        \label{eq: U tilde}
        \widetilde{\mc{U}}:= \mc{U}(\mf{g}) \otimes _{\mc{Z}(\mf{g})} \mc{U}(\mf{h}),
    \end{equation}
    where $\mc{Z}(\mf{g})$ acts on $\mc{U}(\mf{h})$ via the Harish-Chandra projection $\gamma_\mathrm{HC}$ \eqref{eq: HC projection}. 
\end{lemma}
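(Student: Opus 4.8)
The plan is to check that the kernel of the multiplication/composition map $\mc{U}(\mf{g}) \otimes_\C \mc{U}(\mf{h}) \to \Gamma(\widetilde{X}, \mc{D}_{\widetilde{X}})$ contains the relations defining $\widetilde{\mc{U}}$, namely the elements $z \otimes 1 - 1 \otimes \gamma_{\mathrm{HC}}(z)$ for $z \in \mc{Z}(\mf{g})$. Since $\mc{Z}(\mf{g})$ is generated by the Casimir $\Omega$, it suffices to verify the single identity $L_\Omega = R_{\gamma_{\mathrm{HC}}(\Omega)}$ in $\Gamma(\widetilde{X}, \mc{D}_{\widetilde{X}}) = \mc{D}(\A^2)$, the Weyl algebra in two variables (using the facts on quasi-affine varieties recalled above). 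So first I would compute $\gamma_{\mathrm{HC}}(\Omega)$: from $\Omega = h^2 + 2ef + 2fe = h^2 + 2h + 4fe$ (rewriting using $[e,f]=h$), the summands $4fe$ and the part of $2ef$ landing in $\bar{\n}\mc{U}(\g) + \mc{U}(\g)\n$ are killed, leaving $\gamma_{\mathrm{HC}}(\Omega) = h^2 + 2h$.

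Next I would compute $R_{\gamma_{\mathrm{HC}}(\Omega)} = R_h^2 + 2R_h$ using $R_h = x_1\partial_1 + x_2\partial_2$ from \eqref{eq: right action of h}. Expanding $R_h^2 = (x_1\partial_1 + x_2\partial_2)^2$ in the Weyl algebra gives $x_1^2\partial_1^2 + x_2^2\partial_2^2 + 2x_1x_2\partial_1\partial_2 + x_1\partial_1 + x_2\partial_2$ (the extra first-order terms coming from $\partial_i x_i = x_i\partial_i + 1$), so $R_h^2 + 2R_h = x_1^2\partial_1^2 + x_2^2\partial_2^2 + 2x_1x_2\partial_1\partial_2 + 3x_1\partial_1 + 3x_2\partial_2$. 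Comparing with the expression for $L_\Omega$ in \eqref{eq: image of casimir under L}, the two agree. Hence $\Omega \otimes 1 - 1 \otimes \gamma_{\mathrm{HC}}(\Omega)$ lies in the kernel of \eqref{eq: the map U(g) tensor U(h) --> D}, and therefore so does the two-sided ideal it generates (the map is an algebra homomorphism and $R_{\gamma_{\mathrm{HC}}(\Omega)}$ is central in the image since it equals the central element $L_\Omega$), which is exactly the ideal defining the tensor product over $\mc{Z}(\mf{g})$. This yields the desired factorisation through $\widetilde{\mc{U}}$.

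I do not expect any serious obstacle here — the content is the two short Weyl-algebra computations together with the identification $\Gamma(\widetilde{X},\mc{D}_{\widetilde{X}}) \cong \mc{D}(\A^2)$. The only point requiring a little care is that forming the tensor product $\mc{U}(\mf{g}) \otimes_{\mc{Z}(\mf{g})} \mc{U}(\mf{h})$ is legitimate, i.e. that $\mc{Z}(\mf{g})$ maps into the centre of both factors: it is central in $\mc{U}(\mf{g})$ by definition, and it maps to $\mc{U}(\mf{h})$ (which is commutative, $\mf{h}$ being abelian) via the algebra homomorphism $\gamma_{\mathrm{HC}}|_{\mc{Z}(\mf{g})}$, so the balanced-tensor construction makes sense and the induced map on $\widetilde{\mc{U}}$ is a well-defined algebra homomorphism. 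One should also note that $R_h$ commutes with $L_e, L_f, L_h$ — which is immediate from the explicit formulas, or conceptually from the fact that the left $G$-action and right $H$-action on $\widetilde{X}$ commute — so the map \eqref{eq: the map U(g) tensor U(h) --> D} is indeed an algebra homomorphism out of $\mc{U}(\mf{g}) \otimes_\C \mc{U}(\mf{h})$ to begin with.
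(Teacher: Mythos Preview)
Your proposal is correct and follows essentially the same approach as the paper: both reduce to checking the single identity $L_\Omega = R_{h^2+2h}$ by expanding $R_h^2 + 2R_h$ in the Weyl algebra and comparing with \eqref{eq: image of casimir under L}. Your write-up is slightly more detailed (you justify why checking on $\Omega$ alone suffices and why the balanced tensor product is well-defined), but the computational content is identical.
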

\begin{proof}
    Direct computation shows that the image of $\Omega \otimes 1$ and $1 \otimes \gamma_\mathrm{HC}(\Omega)$ agree. Indeed, 
\begin{align*}
1 \otimes \gamma_{\mathrm{HC}}(\Omega) = 1 \otimes (h^2 + 2h) \mapsto &R_h^2 + 2 R_h \\
&=(x_1 \partial_1 + x_2 \partial_2)^2 + 2(x_1 \partial_1 + x_2 \partial_2) \\
&=x_1^2 \partial _1^2 + 3 x_1 \partial_1 + 2 x_1 x_2 \partial_1 \partial_2 + x_2^2 \partial_x^2 + 3 x_2 \partial_x \\
&= L_\Omega.
    \end{align*}
\end{proof}

We refer to the algebra $\widetilde{\mc{U}}$ as the {\em extended universal enveloping algebra}. By Lemma \ref{lem: factors through the quotient tilde U}, we have an algebra homomorphism 
\begin{equation}
    \label{eq: the map tilde U --> D}
    \alpha: \widetilde{\mc{U}} \rightarrow \Gamma(\widetilde{X}, \mc{D}_{\widetilde{X}}); \quad g \otimes g' \mapsto L_g R_{g'}.
\end{equation}
Global sections of $\mc{D}_{\widetilde{X}}$-modules have the structure of $\widetilde{\mc{U}}$-modules via $\alpha.$   

\subsection{Monodromic $\mc{D}_{X}$-modules} 
\label{sec: monodromic D-modules}

The $\mc{D}$-modules which play a role in our story have an additional structure: they are ``$H$-monodromic''. It is necessary for our purposes to work with $H$-monodromic $\mc{D}$-modules on base affine space instead of $\mc{D}$-modules on the flag variety. This is due to the fact that the $\mf{g}$-modules in the construction of the Jantzen filtration have generalized infinitesimal character, so they do not arise as global sections of modules over twisted sheaves of differential operators on the flag variety. 

The machinery of $H$-monodromic $\mc{D}$-modules is rather technical, and the details of the construction are not strictly necessary for our computation of the Jantzen filtration below. However, we thought that it would be useful to describe this construction in a specific example to illustrate that the equivalences established in \cite[\S 2.5]{BBJantzen} are quite clear for $\mf{sl}_2(\C)$. In this section, we describe the construction of $H$-monodromic $\mc{D}$-modules for $\mf{sl}_2(\mathbb{\C})$ and explain how it relates to representations of Lie algebras. More details on the general construction can be found in \cite{BBJantzen, BG99}.

\begin{definition}
    \label{def: H-monodromic D-module} 
    An {\em $H$-monodromic $\mc{D}_X$-module} is a weakly $H$-equivariant\footnote{A {\em weakly $H$-equivariant $\mc{D}_{\widetilde{X}}$-module} is an $H$-equivariant sheaf $\mc{V}$ equipped with a $\mc{D}_{\widetilde{X}}$-module structure so that the isomorphism $\mathrm{act}^*\mc{V} \rightarrow p^* \mc{V}$ given by the equivariant sheaf structure on $\mc{V}$ is a morphism of $\mc{D}_{\widetilde{X}} \boxtimes \mc{O}_H$-modules. Here $\mathrm{act}:\widetilde{X} \times H \rightarrow \widetilde{X}$ is the action map and $p: \widetilde{X} \times H \rightarrow \widetilde{X}$ is the projection map. For a reference on weakly equivariant $\mc{D}$-modules, see \cite[\S 4]{MP}.} $\mc{D}_{\widetilde{X}}$-module.
\end{definition}

There is an equivalent characterization of $H$-monodromic $\mc{D}_X$-modules in terms of $H$-invariant differential operators which is established in \cite[\S2.5.2]{BBJantzen}. This perspective makes the structures of our examples more transparent, so we will take this approach to monodromicity. Below we describe the construction for $\mf{g}=\mf{sl}_2(\C)$.   

The right $H$-action in \eqref{eq: H action on base affine space} induces a left $H$-action on $\mc{O}_{\widetilde{X}}$ and $\mc{D}_{\widetilde{X}}$. The $H$-action on $\mc{D}_{\widetilde{X}}$ satisfies the following relation: for $g \in H$, $\theta \in \mc{D}_{\widetilde{X}}$, and $f \in \mc{O}_{\widetilde{X}}$,
\begin{equation}
    \label{eq: H action on differential operators}
    (g \cdot \theta) (g \cdot f) = g \cdot (\theta(f)). 
\end{equation}

The $H$-action on $\mc{D}_{\widetilde{X}}$ induces an $H$-action on the sheaf $\pi_*(\mc{D}_{\widetilde{X}})$ by algebra automorphisms, where $\pi:\widetilde{X}\rightarrow X$ is the quotient map \eqref{eq: pi}. Here $\pi_*$ is the $\mc{O}$-module direct image. Denote the sheaf of $H$-invariant sections of $\pi_*\mc{D}_{\widetilde{X}}$ by 
\begin{equation}
    \label{eq: D tilde}
    \widetilde{\mc{D}}: = [\pi_*\mc{D}_{\widetilde{X}}]^H. 
\end{equation}
This is a sheaf of algebras on $X$. Explicitly, on an open set $U \subseteq X$, 
\begin{equation}
    \label{eq: explicit definition of D tilde}
    \widetilde{\mc{D}}(U) = \mc{D}_{\widetilde{X}}(\pi^{-1}(U))^H.
\end{equation}
Note that because $\pi$ is an $H$-torsor, $\pi^{-1}(U)$ is $H$-stable for any set $U$, so this construction is well-defined. 

Let $\mc{M}(\mc{D}_{\widetilde{X}}, H)_{\mathrm{weak}}$ be the category of weakly $H$-equivariant $\mc{D}_{\widetilde{X}}$-modules, and $\mc{M}(\widetilde{\mc{D}})$ be the category of $\widetilde{\mc{D}}$-modules. 
By \cite[\S 1.8.9, \S 2.5.2]{BBJantzen}, there is an equivalence of categories
\begin{equation}
\label{eq: equivalent notions of monodromic D-modules}
\mc{M}(\mc{D}_{\widetilde{X}}, H)_{\mathrm{weak}} \simeq \mc{M}(\widetilde{\mc{D}}).
\end{equation}
Hence we can study monodromic $\mc{D}_X$-modules by instead considering $\widetilde{\mc{D}}$-modules. For the remainder of the paper, we will take this to be our definition of monodromicity. 

\begin{definition}
    \label{def: monodromic take 2}
    An {\em $H$-monodromic $\mc{D}_X$-module} is a $\widetilde{\mc{D}}$-module, where $\widetilde{\mc{D}}$ is as in \eqref{eq: D tilde}. 
\end{definition}

\begin{remark}
    \label{rem: relationship between D tilde and TDOs}
    (Relationship to twisted differential operators) The sheaf $\widetilde{\mc{D}}$ is a sheaf of $S(\h)$-algebras. In our example, the $S(\h)$-action is given by multiplication by the operator $R_h$ \eqref{eq: right action of h}. In particular, we can consider $S(\h)$ as a subsheaf of $\widetilde{\mc{D}}$. In fact, it is the center \cite[\S 2.5]{BBJantzen}. For $\lambda \in \h^*$, denote by $\mf{m}_\lambda \subset S(\h)$ the corresponding maximal ideal. The sheaf $\mc{D}_\lambda:= \widetilde{\mc{D}}/ \mf{m}_\lambda \widetilde{\mc{D}}$ is a twisted sheaf of differential operators (TDO) on $X$. Hence $\widetilde{\mc{D}}$-modules on which $R_h$ acts by eigenvalue $\lambda$ can be naturally identified with modules over the TDO $\mc{D}_\lambda$. 
\end{remark}

Modules over $\widetilde{\mc{D}}$ are directly related to modules over the extended universal enveloping algebra \eqref{eq: U tilde} via the global sections functor. The relationship is as follows. Because the left $G$-action and right $H$-action commute, the differential operators $L_e, L_f,$ and $L_h$ in \eqref{eq: differential operators corresponding to e, f, h} are $H$-invariant\footnote{This can also be shown via direct computation using \eqref{eq: differential operators corresponding to e, f, h} and \eqref{eq: right action of h}.}. Hence the image of the homomorphism \eqref{eq: the map tilde U --> D} is contained in $H$-invariant differential operators: 
\[
\alpha(\widetilde{\mc{U}}) \subseteq \Gamma(\widetilde{X}, \mc{D}_{\widetilde{X}})^H.
\]
Composing $\alpha$ with $\Gamma(\pi_*)$, we obtain a map 
\begin{equation}
    \label{eq: U tilde is global sections of D tilde}
    \widetilde{\mc{U}} \rightarrow \Gamma(X, \widetilde{\mc{D}}). 
\end{equation}

\begin{theorem}
    \label{thm: global sections of D tilde is U tilde}
    \cite[Lemma 3.2.2]{BBJantzen} The map \eqref{eq: U tilde is global sections of D tilde} is an isomorphism.
\end{theorem}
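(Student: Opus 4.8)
The plan is to prove the map $\widetilde{\mc{U}} \to \Gamma(X, \widetilde{\mc{D}})$ is an isomorphism by working concretely with the Weyl algebra. Since $\widetilde{X} = \C^2 \setminus \{0\}$ is normal with $\mathrm{codim}_{\A^2}(\{0\}) = 2$, the facts quoted in the excerpt give $\mc{D}(\widetilde{X}) = \mc{D}(\A^2) = \C\langle x_1, x_2, \partial_1, \partial_2\rangle$, the second Weyl algebra $A_2$. Moreover, by the explicit description \eqref{eq: explicit definition of D tilde}, taking global sections of $\widetilde{\mc{D}} = [\pi_*\mc{D}_{\widetilde{X}}]^H$ over $X$ yields $\Gamma(X, \widetilde{\mc{D}}) = \mc{D}(\widetilde{X})^H = A_2^H$, the subalgebra of $H$-invariants under the action induced by the scaling action \eqref{eq: H action on base affine space}. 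So the theorem reduces to showing that $\alpha$ induces an isomorphism $\widetilde{\mc{U}} \xrightarrow{\sim} A_2^H$.

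First I would identify $A_2^H$ explicitly. The $H$-action scales $x_i \mapsto a x_i$ and $\partial_i \mapsto a^{-1}\partial_i$, so a monomial $x_1^{a_1} x_2^{a_2} \partial_1^{b_1}\partial_2^{b_2}$ is $H$-invariant iff $a_1 + a_2 = b_1 + b_2$; thus $A_2^H$ is the span of the "degree-zero" (under the grading $\deg x_i = 1$, $\deg \partial_i = -1$) monomials. This is a classical algebra: it is generated by $L_e = -x_2\partial_1$, $L_f = -x_1\partial_2$, $L_h = -x_1\partial_1 + x_2\partial_2$, and the Euler operator $R_h = x_1\partial_1 + x_2\partial_2$ (one checks every degree-zero monomial can be rewritten in these — e.g. $x_1\partial_1 = \tfrac12(R_h - L_h)$, and higher-degree monomials like $x_1\partial_2 \cdot x_2 \partial_1$ etc. are handled by induction on degree/order). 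Next I would show $\alpha$ is surjective: since $L_e, L_f, L_h$ lie in the image of $L$ and $R_h$ in the image of $R$, and these four elements generate $A_2^H$, surjectivity of $\alpha: \widetilde{\mc{U}} \to A_2^H$ follows.

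For injectivity I would use a dimension/filtration count. Both sides carry natural filtrations: on $\widetilde{\mc{U}} = \mc{U}(\g) \otimes_{\mc{Z}(\g)} \mc{U}(\h)$ one has the PBW filtration, with associated graded (using that $\mc{Z}(\g)$ is a polynomial ring in one generator $\Omega$ and $\gamma_{\mathrm{HC}}(\Omega)$ is a nonzero non-constant element of $S(\h)$, so the tensor product behaves like a complete intersection quotient) of dimension matching; on $A_2^H$ one has the order filtration with $\gr(A_2^H) = \C[x_1,x_2,\xi_1,\xi_2]^H$, the invariant ring for the scaling action on $T^*\C^2$. The invariant ring $\C[x_1,x_2,\xi_1,\xi_2]^H$ is generated by $x_1\xi_1, x_1\xi_2, x_2\xi_1, x_2\xi_2$ subject to one relation (the $2\times 2$ determinant $= 0$), i.e. it is the coordinate ring of the rank-$\le 1$ locus, which has dimension $3$. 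I would match this against $\gr\widetilde{\mc{U}}$: $\gr\mc{U}(\g) = S(\g)$ has dimension $3$, $\gr\mc{Z}(\g) = \C[\Omega]$ contributes a $1$-dimensional quotient, and tensoring over it with $S(\h)$ (dimension $1$) gives $3 - 1 + 1 = 3$; a careful comparison of Hilbert series (or a direct check that the symbol map $\gr\widetilde{\mc{U}} \to \gr A_2^H$ is an isomorphism, both being the coordinate ring of the appropriate affine variety) shows $\gr\alpha$ is injective, hence $\alpha$ is injective.

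The main obstacle is the injectivity step: one must verify that no relations are lost in passing to $\gr$, i.e. that the PBW-type filtration on $\widetilde{\mc{U}}$ has associated graded exactly $S(\g)\otimes_{\C[\Omega]} S(\h) = \C[x_1,x_2,\xi_1,\xi_2]^H$, with no extra nilpotents or collapsing. This amounts to checking that $\gamma_{\mathrm{HC}}(\Omega) \in S(\h)$ and the image of $\Omega$ under the symbol map $S(\g) \to \C[x_1,x_2,\xi_1,\xi_2]$ (which is $\mathrm{pr}(x_1\xi_1 - x_2\xi_2)$-type, i.e. the symbol $x_1^2\xi_1^2 + 2x_1x_2\xi_1\xi_2 + x_2^2\xi_2^2 = (x_1\xi_1 + x_2\xi_2)^2$ of $L_\Omega$) match correctly — and indeed the computation in the proof of Lemma \ref{lem: factors through the quotient tilde U} already exhibits $L_\Omega = R_h^2 + 2R_h$, whose top symbol is $R_h^2$, confirming the identification. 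Given that the paper works entirely in $\mathfrak{sl}_2$, this graded comparison is a finite, low-dimensional computation rather than a conceptual difficulty, but it is where the real content lies; alternatively, one could cite \cite[Lemma 3.2.2]{BBJantzen} directly, as the statement does, and simply indicate how the general argument specializes.
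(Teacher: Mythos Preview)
Your proposal is correct and shares the paper's overall architecture: both identify $\Gamma(X,\widetilde{\mc{D}})$ with the $H$-invariant (degree-zero) subalgebra of the Weyl algebra $A_2$, observe that it is generated by the four operators $x_i\partial_j$, and then show that $L_e, L_f, L_h, R_h$ hit these generators, giving surjectivity. The paper adds an explicit description of $\widetilde{\mc{D}}$ on the two affine patches of $\C\mathbb{P}^1$ before passing to global sections, but this is expository rather than logically necessary.

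The genuine difference is in the injectivity step. The paper simply checks by hand that $L_e, L_f, L_h, R_h$ satisfy exactly the commutation relations of $e\otimes 1, f\otimes 1, h\otimes 1, 1\otimes h$ in $\widetilde{\mc{U}}$ and are linearly independent, and declares injectivity from this (implicitly using that $\widetilde{\mc{U}}$ is presented by these relations together with the single identification $\Omega\otimes 1 = 1\otimes\gamma_{\mathrm{HC}}(\Omega)$). Your approach instead passes to associated gradeds for the order/PBW filtrations and matches both sides with the coordinate ring of rank-$\leq 1$ matrices. Your route is more conceptual and generalizes more readily (it is essentially how one would prove the result for arbitrary $\mf{g}$), at the cost of needing to verify that $\gr\widetilde{\mc{U}} \simeq S(\mf{g})\otimes_{\C[\Omega]} S(\mf{h})$ has no unexpected collapsing; the paper's route is more elementary and self-contained for $\mf{sl}_2$, but its final sentence sweeps under the rug exactly the point you flag as the ``main obstacle,'' namely that no further relations hold in $\widetilde{\mc{U}}$.
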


\begin{proof}
    The theorem holds for general $\mf{g}$. We will prove the theorem for $\mf{g}=\mf{sl}_2(\C)$ by direct computation. 

    We start by describing the sheaves $\pi_*\mc{D}_{\widetilde{X}}$ and $\widetilde{\mc{D}}$ on $X=\C \mathbb{P} ^1$ by describing them on the open patches 
    \begin{equation}
        \label{eq: open cover of P1}
        U_1 := \C \mathbb{P}^1 \backslash \{(0:1) \}  \quad \text{and} \quad U_2:= \C \mathbb{P}^1 \backslash \{ (1:0)\}
    \end{equation} 
    and giving gluing conditions. Set 
    \begin{equation}
        V_1:= \pi^{-1}(U_1) = \C^2 \backslash V(x_1) \quad \text{and} \quad V_2:= \pi^{-1}(U_2) =\C^2 \backslash V(x_2),
    \end{equation}
    where $V(f(x_1, x_2))$ denotes the vanishing of the polynomial $f(x_1, x_2)$. By definition, we have 
    \begin{align}
        \pi_*\mc{D}_{\widetilde{X}}(U_1) &= \mc{D}_{\widetilde{X}}(V_1) = \mc{D}(\C^2)[x_1^{-1}], \\
        \pi_*\mc{D}_{\widetilde{X}}(U_2) &= \mc{D}_{\widetilde{X}}(V_2) = \mc{D}(\C^2)[x_2^{-1}].
    \end{align}
    with obvious gluing conditions. 
    
    Using \eqref{eq: H action on differential operators}, we conclude that the $H$-action on $\mc{D}_{\widetilde{X}}$ is given by the local formulas
    \begin{equation}
        \label{eq: H action on differential operators sl2}
        g \cdot x_i = gx_i \quad \text{and} \quad g \cdot \partial_i = g^{-1} \partial_i,
    \end{equation}
    where $g \in H$ is regarded as an element of $\C^\times$ under the identification 
    \[
    H \simeq \C^\times, \bp a & 0 \\ 0 & a^{-1} \ep \mapsto a.
    \]
    From this we obtain a local description of $\widetilde{\mc{D}}$, using \eqref{eq: explicit definition of D tilde}:
    \begin{align}
    \label{eq: local structure of D tilde for sl2}
\widetilde{\mc{D}}(U_1) &= \langle x_1^{-1}x_2, x_1 \partial_1, x_1 \partial_2, x_2 \partial_1, x_2 \partial_2 \rangle \subseteq \mc{D}_{\widetilde{X}}(V_1) \\ 
    \widetilde{\mc{D}}(U_2)&= \langle x_1 x_2^{-1}, x_1 \partial_1, x_1 \partial_2, x_2 \partial_1, x_2 \partial_2 \rangle \subseteq \mc{D}_{\widetilde{X}}(V_2). 
    \end{align}
Hence the global sections are given by 
\begin{equation}
    \label{eq: global sections of D tilde for sl2}
    \Gamma(X, \widetilde{\mc{D}}) \simeq \langle x_1 \partial_1, x_1 \partial_2, x_2 \partial_1, x_2 \partial_2 \rangle =\Gamma(\widetilde{X}, \mc{D}_{\widetilde{X}})^H \subseteq \Gamma(\widetilde{X}, \mc{D}_{\widetilde{X}}). 
\end{equation}

Now, it is clear that 
\begin{equation}
    \label{eq: image of U tilde for sl2}
L_e = -x_2 \partial_1, \quad L_f = -x_1 \partial_2, \quad L_h + R_h = 2x_2 \partial_2 \quad \text{and} \quad L_h-R_h = -2x_1 \partial_1
\end{equation}
so the operators $L_e, L_f, L_h$ and $R_h$ generate $\Gamma(\widetilde{X}, \mc{D}_{\widetilde{X}})^H$. Since $e\otimes 1, f \otimes 1, h \otimes 1$ and $1 \otimes h$ generate $\widetilde{\mc{U}}$, this shows that the map \eqref{eq: U tilde is global sections of D tilde} is surjective. Direct computations establish that
$$[L_e,L_f]=x_2\partial_1x_1\partial_2-x_1\partial_2x_2\partial_1=x_2\partial_2-x_1\partial_1=L_h,$$
$$[L_e,L_h]=x_2\partial_1(x_1\partial_1-x_2\partial_2)-(x_1\partial_1-x_2\partial_2)x_2\partial_1=2x_2\partial_1=-2L_e,$$
$$[L_f,L_h]=x_1\partial_2(x_1\partial_1-x_2\partial_2)-(x_1\partial_1-x_2\partial_2)x_1\partial_2=-2x_1\partial_2=2L_f,$$
$$[L_e,R_h]=[L_f,R_h]=[L_h,R_h]=0.$$
Combining these computations with the fact that $L_e,L_f,L_h$ and $R_h$ are linearly independent shows that the relations satisfied by $L_e, L_f, L_h,$ and $R_h$ are precisely those satisfied by $e\otimes1,f\otimes1,h\otimes1$ and $1\otimes h.$ Therefore, the map \eqref{eq: U tilde is global sections of D tilde} is also injective. 
\end{proof}

The relationships described in this section can be summarised with the following commuting diagrams. 
\begin{equation}
\label{eq: big diagram}
    \begin{tikzcd}
        \mc{M}_{coh}(\mc{D}_{\widetilde{X}},H)_{\mathrm{weak}} \arrow[rr, "\text{forget equiv.}"] \arrow[d, "\Gamma"'] & & \mc{M}_{coh}(\mc{D}_{\widetilde{X}})  \arrow[rr, "\pi_*"] & & \mc{M}_{coh}(\pi_* \mc{D}_{\widetilde{X}}) \arrow[d, "\text{restrict}"] \\
        \mc{M}_{f.g.}(\widetilde{\mc{U}}) & & & & \mc{M}_{coh}(\widetilde{\mc{D}}) \arrow[llll, "\Gamma"']
    \end{tikzcd}
\end{equation}
The composition of the top two arrows and the right-most arrow is the equivalence \eqref{eq: equivalent notions of monodromic D-modules} (See \cite[\S1.8.9, \S2.5.3]{BBJantzen} for more details.)

 \subsection{Verma modules and dual Verma modules}
\label{sec: vermas and dual vermas} Using the map \eqref{eq: the map tilde U --> D} constructed in Section \ref{sec: the map}, we can describe the $\widetilde{\mc{U}}$-module structure on various classes of $\mc{D}_{\widetilde{X}}$-modules. We will start by examining the $\mc{D}_{\widetilde{X}}$-modules $j_+\mc{O}_U$ and $j_!\mc{O}_U$, where $j:U \hookrightarrow \widetilde{X}$ is inclusion of the open union of $N$-orbits 
\begin{equation}
    \label{eq: open B orbit U}
    U := \C^2 \backslash V(x_2).
\end{equation}
Here the $+$ and $!$ indicate the $\mc{D}$-module push-forward functors, see \cite{D-modulesnotes}. These are the $\mc{D}_{\widetilde{X}}$-modules which will eventually be endowed with geometric Jantzen filtrations in Section \ref{sec: monodromy filtration and Jantzen filtration}. In this section, we describe the $\widetilde{\mc{U}}$-module structure on $\Gamma(\widetilde{X}, j_+\mc{O}_U)$ and $\Gamma(\widetilde{X}, j_!\mc{O}_U)$. 

Because $j$ is an open embedding, the $\mc{D}_{\widetilde{X}}$-module $j_+\mc{O}_U$ is just the sheaf $\mc{O}_U$ with $\mc{D}_{\widetilde{X}}$-module structure given by the restriction of $\mc{D}_U$ to $\mc{D}_{\widetilde{X}} \subseteq \mc{D}_U$. Hence the global sections of $j_+\mc{O}_U$ can be identified with the ring 
\begin{equation}
    \label{eq: global sections of J+}
    \Gamma(\widetilde{X}, j_+\mc{O}_U) = \C[x_1, x_2, x_2^{-1}].
\end{equation}

The operators $L_e, L_f, L_h$, and $R_h$ from \eqref{eq: differential operators corresponding to e, f, h} and \eqref{eq: right action of h} act on monomials $x_1^m x_2^n$ for $m \geq 0$ $n \in \Z$ by the  formulas
\begin{align}
    \label{eq: e action on monomials}
    L_e\cdot x_1^mx_2^n&=-mx_1^{m-1}x_2^{n+1}, \\
    \label{eq: f action on monomials}
L_f\cdot x_1^mx_2^n&=-nx_1^{m+1}x_2^{n-1}, \\
\label{eq: left h action on monomials}
L_h\cdot x_1^mx_2^n&=(n-m)x_1^mx_2^n,\\
\label{eq: right h action on monomials}
R_h\cdot x_1^mx_2^n&=(m+n)x_1^mx_2^n.
\end{align}

Using \eqref{eq: e action on monomials}-\eqref{eq: right h action on monomials}, we can illustrate the $\widetilde{\mc{U}}$-module structure on $\Gamma(\widetilde{X}, j_+\mc{O}_U)$ using nodes and colored arrows. We do this in Figure \ref{fig: dual Vermas}. The monomials $x_1^m x_2^n$ for $m \in \Z_{\geq 0}$ and $n \in \Z$ form a basis for $\Gamma(\widetilde{X}, j_+\mc{O}_U)$. The {\color{ForestGreen} green} arrows illustrate the action of the operator {\color{ForestGreen} $L_e$} on basis elements, the {\color{red} red} arrows the action of {\color{red} $L_f$}, and the {\color{blue} blue} arrows the action of {\color{blue} $L_h$}. If an operator acts by zero, no arrow is included. The {\color{Gray} $R_h$}-eigenspaces are highlighted in grey, with corresponding eigenvalues listed below.  

\begin{figure}[h]
\centering
\makebox[\textwidth][c]{\includegraphics[width=1.2\textwidth]{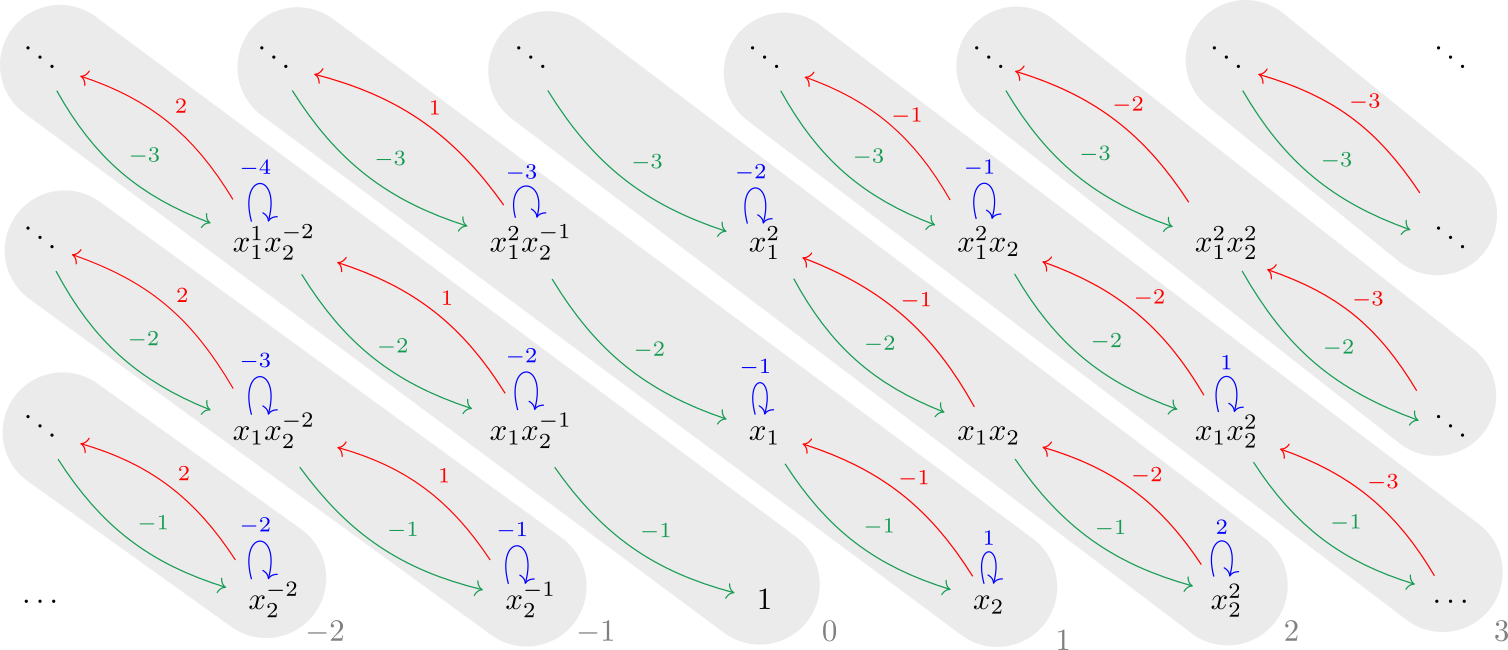}}
\caption{Dual Verma modules arise as global sections of $j_+\mc{O}_U$.}
\label{fig: dual Vermas}
\end{figure}

\begin{remark}
    \label{rem: observations about dual Vermas}
We make the following observations about the $\mc{D}_{\widetilde{X}}$-module  $j_+\mc{O}_U$ and its global sections. 
    \begin{enumerate}
        \item As a $\widetilde{\mc{U}}$-module, $\Gamma(\widetilde{X}, j_+\mc{O}_U)$ decomposes into a direct sum of submodules, each of which is an $R_h$-eigenspace corresponding to an integer eigenvalue:
        \[
\Gamma(\widetilde{X}, j_+ \mc{O}_U) = \bigoplus_{n \in \Z} \Gamma(\widetilde{X}, j_+ \mc{O}_U)_n
        \]
        In Figure \ref{fig: dual Vermas}, these eigenspaces are highlighted in grey. 
        \item As a $\mc{U}(\mf{g})$-module, the $R_h$-eigenspace $\Gamma(\widetilde{X}, j_+\mc{O}_U)_n$ of eigenvalue $n$ is isomorphic to the dual Verma module of highest weight $n$. In particular, it is irreducible if $n<0$, and it has a unique irreducible finite-dimensional submodule if $n \geq 0$.

        \item The sheaf $\pi_*j_+\mc{O}_U$ is a monodromic $\mc{D}_X$-module because it admits an action of $\widetilde{\mc{D}}$ (Definition \ref{def: monodromic take 2}). For each positive integer $n$, $\pi_*j_+\mc{O}_{U}$ has a  subsheaf $(\pi_*j_+\mc{O}_{U})_n$ on which $R_h$ acts locally by the eigenvalue $n$. These subsheaves are $\mc{D}_n$-modules, where $\mc{D}_n$ is the twisted sheaf of differential operators as defined in Remark \ref{rem: relationship between D tilde and TDOs}. These are exactly the $\mc{D}_n$-modules appearing in \cite[\S6, Fig. 4]{Rom21}. 
    \end{enumerate}
    \end{remark}

Next we will describe $\Gamma(\widetilde{X}, j_!\mc{O}_{\widetilde{X}})$. This is slightly more involved. By definition, 
\begin{equation}
\label{eq: j shriek}
     j_!= \mathbb{D}_{\widetilde{X}} \circ j_+ \circ \mathbb{D}_U,
\end{equation}
where $\mathbb{D}$ denotes the holonomic duality functor. Explicitly, for a smooth algebraic variety $Y$ and a holonomic $\mc{D}_Y$-module $\mc{V}$, 
\begin{equation}
\label{eq: duality functor}
    \mathbb{D}_Y(\mc{V}):= \Ext_{\mc{D}_Y}^{\dim Y}(\mc{V}, \mc{D}_X).
\end{equation}
This is a well-defined functor from the category of holonomic $\mc{D}_Y$-modules to itself \cite[Corollary 2.6.8]{HoTa}.

The first two steps of the composition \eqref{eq: j shriek} are straightforward to compute. The right $\mc{D}_U$-module $\mathbb{D}_U \mc{O}_U$ is just the sheaf $\mc{O}_U$,  viewed as a right $\mc{D}_U$-module via the natural right action. Then, since $j$ is an open immersion, $j_+ \mathbb{D}_U \mc{O}_U$ is the sheaf $\mc{O}_U$ with right $\mc{D}_{\widetilde{X}}$-module structure given by restriction to $\mc{D}_{\widetilde{X}} \subset \mc{D}_U$. 

To apply $\mathbb{D}_{\widetilde{X}}$ to $j_+ \mathbb{D}_U \mc{O}_U$, we must take a projective resolution of $j_+\mathbb{D}_U \mc{O}_U$. First, we make the identification
\begin{equation}
    \label{eq: identify j+ as a quotient}
    j_+ \mathbb{D}_U \mc{O}_U \simeq \langle \partial_1, \partial_2 \rangle \mc{D}_U \backslash \mc{D}_U.
\end{equation}
We take the following free (hence projective) resolution of $\langle \partial_1, \partial_2 \rangle \mc{D}_U \backslash \mc{D}_U$:
\begin{equation}
    \label{eq: projective resolution}
    0 \leftarrow \langle \partial_1, \partial_2 \rangle \mc{D}_U \backslash \mc{D}_U \xleftarrow{\epsilon} \mc{D}_{\widetilde{X}} \xleftarrow{d_0} \mc{D}_{\widetilde{X}} \oplus \mc{D}_{\widetilde{X}} \xleftarrow{d_1} \mc{D}_{\widetilde{X}} \xleftarrow{d_2} 0
\end{equation}
where the maps are defined by 
\begin{align*}
    \epsilon: 1 &\mapsto x_2^{-1}, \\
    d_0: (\theta_1, \theta_2) &\mapsto \partial_1 \theta_1 - x_2 \partial_2 \theta_2, \\
    d_1: 1 &\mapsto (x_2 \partial_2, \partial_1).
\end{align*}
Applying $\mathrm{Hom}_{\mc{D}_{\widetilde{X}}, r}( - , \mc{D}_{\widetilde{X}})$ to this complex, we obtain the complex 
\begin{equation}
    \label{eq: apply hom}
    0 \rightarrow \mathrm{Hom}_{\mc{D}_{\widetilde{X}},r}(\mc{D}_{\widetilde{X}}, \mc{D}_{\widetilde{X}}) \xrightarrow{d_0^*} \mathrm{Hom}_{\mc{D}_{\widetilde{X}},r}(\mc{D}_{\widetilde{X}}\oplus \mc{D}_{\widetilde{X}}, \mc{D}_{\widetilde{X}}) \xrightarrow{d_1^*} \mathrm{Hom}_{\mc{D}_{\widetilde{X}},r}(\mc{D}_{\widetilde{X}}, \mc{D}_{\widetilde{X}}) \xrightarrow{d_2^*} \ 0 
\end{equation}
where $d_i^*$ sends a morphism $f$ to $f \circ d_i$. 

Because the module $j_+\mathbb{D}_U \mc{O}_U$ is holonomic, the complex \eqref{eq: apply hom} only has nonzero cohomology in degree $2$. This can also be seen by direct computation. By identifying $\mathrm{Hom}_{\mc{D}_{\widetilde{X}},r}(\mc{D}_{\widetilde{X}}, \mc{D}_{\widetilde{X}}) \simeq \mc{D}_{\widetilde{X}}$ via $f \mapsto f(1)$ and $\mathrm{Hom}_{\mc{D}_{\widetilde{X}},r}(\mc{D}_{\widetilde{X}}\oplus \mc{D}_{\widetilde{X}}, \mc{D}_{\widetilde{X}}) \simeq \mc{D}_{\widetilde{X}} \oplus \mc{D}_{\widetilde{X}}$ via $ f \mapsto (f(1,0), f(0,1))$, we see that 
\[
\ker d_2^* \simeq \mc{D}_{\widetilde{X}} \text{ and } \im d_1^* \simeq \mc{D}_{\widetilde{X}} \langle  \partial_1, x_2 \partial_2 \rangle. 
\]
Hence,
\begin{equation}
    \label{eq: explicit description of j shriek}
    j_!\mc{O}_U \simeq \mc{D}_{\widetilde{X}} / \mc{D}_{\widetilde{X}} \langle \partial_1, x_2 \partial_2 \rangle.
\end{equation}

Now we can describe the global sections of $j_! \mc{O}_U$ and illustrate their $\widetilde{\mc{U}}$-module structure, as we did for $\mc{O}_{\widetilde{X}}$ and $j_+\mc{O}_U$. The monomials $x_1^m x_2^n$ and $x_1^m \partial_2^n$ for $m, n \geq 0$ form a basis for $\Gamma(\widetilde{X}, j_! \mc{O}_U)$. The action of $L_e, L_f, L_h$ and $R_h$ on $x^m_1 x_2^n$ for $m \geq 0$ and $n > 0$ is given by equations \eqref{eq: e action on monomials}-\eqref{eq: right h action on monomials}. The action of $L_e, L_f, L_h$ and $R_h$ on $x^m_1 \partial_2^n$ for $m \geq 0$ and $n > 0$ is given by 
\begin{align}
    \label{eq: e action on mixed monomials}
    L_e \cdot x_1^m \partial_2^n &= m(n-1) x_1 ^{m-1} \partial_2^{n-1}, \\
    \label{eq: f action on mixed monomials} 
    L_f \cdot x_1^m \partial_2^n &= -x_1^{m+1} \partial_2^{n+1}, \\
    \label{eq: h action on mixed monomials}
    L_h \cdot x_1^m \partial_2^n &= -(m+n) x_1^m \partial_2^n,\\
    \label{eq: right h action on mixed monomials}
    R_h \cdot x_1^m \partial_2^n &= (m-n) x_1^m \partial_2^n .
\end{align}
The action of $L_e$ on $x_1^m$ is given by \eqref{eq: e action on monomials}, the action of $L_f$ on $x_1^m$ is given by \eqref{eq: f action on mixed monomials}, and the actions of $L_h$ and $R_h$ on $x^m$ are given by either \eqref{eq: left h action on monomials}-\eqref{eq: right h action on monomials} or \eqref{eq: h action on mixed monomials}-\eqref{eq: right h action on mixed monomials}. 

We illustrate the $\widetilde{\mc{U}}$-module structure of $\Gamma(\widetilde{X}, j_! \mc{O}_U)$ in Figure \ref{fig: Vermas}.  The colors indicate the same operators as in the earlier example: {\color{ForestGreen} green} is {\color{ForestGreen} $L_e$}, {\color{red} red} is {\color{red} $L_f$}, {\color{blue} blue} is {\color{blue} $L_h$}, and {\color{Gray} $R_h$}-eigenspaces are highlighted in grey, with corresponding eigenvalues listed below.    

\begin{figure}[h]
\makebox[\textwidth][c]{\includegraphics[width=1.2\textwidth]{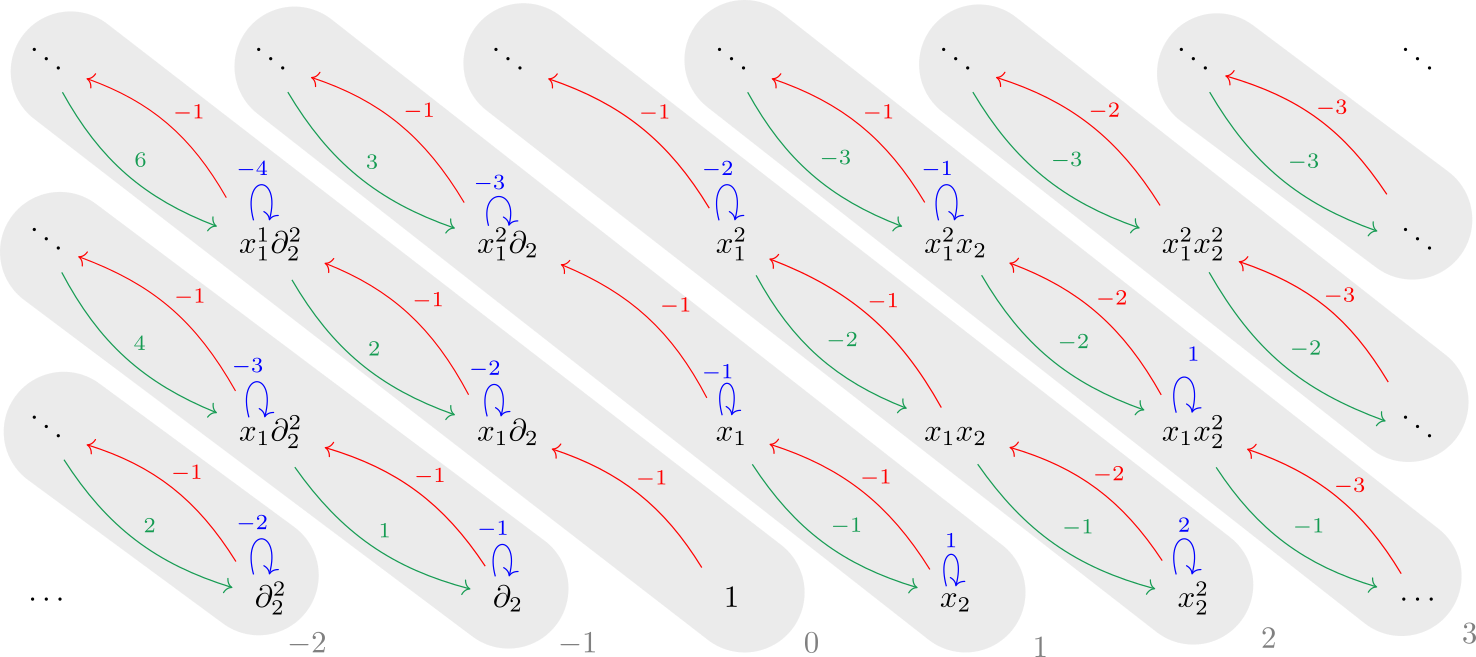}}
\caption{Verma modules arise as global sections of $j_!\mc{O}_U$.}
\label{fig: Vermas}
\centering
\end{figure}

\begin{remark}
    \label{rem: observations about Vermas} 
    We make the following observations about $\Gamma(\widetilde{X}, j_!\mc{O}_U)$. 
    \begin{enumerate}
        \item As a $\widetilde{\mc{U}}$-module, $\Gamma(\widetilde{X}, j_!\mc{O}_U)$ decomposes into a direct sum of submodules, each of which is an $R_h$-eigenspace corresponding to an integer eigenvalue. Again, these eigenspaces are highlighted in grey.
        \item As a $\mc{U}(\mf{g})$-module, the $R_h$-eigenspace of $\Gamma(\widetilde{X}, j_!\mc{O}_U)$ of eigenvalue $n$ is isomorphic to the Verma module of highest weight $n$. In particular, it is irreducible if $n<0$, and it has a unique irreducible finite-dimensional quotient if $n \geq 0$.
        \item The sheaf $\pi_*j_!\mc{O}_U$ is an $H$-monodromic $\mc{D}_X$-module. For each positive integer $n$, $\pi_*j_!\mc{O}_{U}$ has a  subsheaf $(\pi_*j_!\mc{O}_{U})_n$ on which $R_h$ acts locally by the eigenvalue $n$. These subsheaves are modules over the TDO $\mc{D}_n$ (Remark \ref{rem: relationship between D tilde and TDOs}). These are exactly the $\mc{D}_n$-modules appearing in \cite[\S6, Fig. 3]{Rom21}.
    \end{enumerate}
\end{remark}

\subsection{The maximal extension $\Xi_\rho\mc{O}_U$}
\label{sec: maximal extension}

To describe the geometric Jantzen filtrations on the $\mc{D}_{\widetilde{X}}$-modules $j_!\mc{O}_U$ and $j_+\mc{O}_U$, it is necessary to introduce the maximal extension functor 
\[
\Xi_\rho: \mc{M}_\mathrm{hol}(\mc{D}_U) \rightarrow \mc{M}_\mathrm{hol}(\mc{D}_{\widetilde{X}}).
\]
This functor (defined in \eqref{eq: maximal extension def} below) extends $j_+$ and $j_!$ (see \eqref{eq: canonical ses 1} - \eqref{eq: canonical ses 2}), so it is a natural way to study both modules $j_!\mc{O}_U$ and $j_+\mc{O}_U$ at once. In this section, we give the construction of $\Xi_\rho$, then describe the $\widetilde{\mc{U}}$-module structure on $\Gamma(\widetilde{X}, \Xi_\rho\mc{O}_U)$. 

To start, we recall the construction of maximal extension for $\mc{D}$-modules, which is a special case of the construction in \cite{Bei87}, which produces the maximal extension and nearby cycles functors. Let $Y$ be a smooth variety, $f:Y \rightarrow \mathbb{A}^1$ a regular function, and 
\begin{equation}
    \label{eq: open closed decomp}
U:=f^{-1}(\mathbb{A}^1 - \{0\}) \xhookrightarrow{j} Y \xhookleftarrow{i} f^{-1}(0) 
\end{equation}
the corresponding open-closed decomposition of $Y$. For $n \in \mathbb{N}$, denote by 
\begin{equation}
    \label{eq: In}
    I^{(n)}:= \left( \mc{O}_{\mathbb{A}^1 - \{0\}} \otimes \C[s]/s^n \right) t^s
\end{equation}
the free rank $1$ $ \mc{O}_{\mathbb{A}^1 - \{0\}} \otimes \C[s]/s^n$-module generated by the symbol $t^s$. The action $\partial_t \cdot t^s = st^{-1}t^s$ gives $I^{(n)}$ the structure of a $\mc{D}_{\mathbb{A}^1-\{0\}}$-module. Any $\mc{D}_U$-module $\mc{M}_U$ can be deformed using $I^{(n)}$: set 
\begin{equation}
    \label{eq: fsMu}
    f^s\mc{M}_U^{(n)}:= f^+I^{(n)} \otimes_{\mc{O}_U} \mc{M}_U
\end{equation}
to be the $\mc{D}_U$-module obtained by twisting $\mc{M}_U$ by $I^{(n)}$. Note that $f^s\mc{M}_U^{(1)} = \mc{M}_U$, and that both $I^{(n)}$ and $f^s\mc{M}_U^{(n)}$ have a natural action by $s \in \C[s]/s^n$.

Assume that $\mc{M}_U$ is holonomic. Denote by 
\begin{equation}
    \label{eq: canonical map}
    \mathrm{can}: j_! f^s \mc{M}_U^{(n)} \rightarrow j_+f^s \mc{M}_{U}^{(n)}
\end{equation}
the canonical map between $!$ and $+$ pushforward, and 
\begin{equation}
\label{eq: s1n}
    s^1(n): j_! f^s \mc{M}_U^{(n)} \rightarrow j_+f^s \mc{M}_{U}^{(n)}
\end{equation}
the composition of $\mathrm{can}$ with multiplication by $s$. For large enough $n$, the cokernel of $s^1(n)$ stabilizes; i.e., $\coker s^1(n) = \coker s^1(n+k)$ for all $k>0$. For $n\gg 0$, define the $\mc{D}_Y$-module 
\begin{equation}
    \label{eq: maximal extension def}
    \Xi_f \mc{M}_U := \coker s^1(n),
\end{equation}
called the {\em maximal extension} of $\mc{M}_U$. By construction, this module comes equipped with the nilpotent endomorphism $s$. The corresponding functor
\begin{equation}
    \label{eq: maximal extension functor}
    \Xi_f: \mc{M}_\mathrm{hol}(\mc{D}_U) \rightarrow \mc{M}_{\mathrm{hol}}(\mc{D}_Y)
\end{equation}
is exact \cite[Lemma 4.2.1(i)]{BBJantzen}. Moreover, there are canonical short exact sequences \cite[Lemma 4.2.1 (ii)']{BBJantzen}
\begin{align}
    \label{eq: canonical ses 1}
    &0 \rightarrow j_!\mc{M}_U \rightarrow \Xi_f\mc{M}_U \rightarrow \coker(\mathrm{can}) \rightarrow 0 \\ 
    \label{eq: canonical ses 2}
    &0 \rightarrow \coker(\mathrm{can}) \rightarrow \Xi_f \mc{M}_U \rightarrow j_+ \mc{M}_U \rightarrow 0
\end{align}
with $j_! = \ker (s: \Xi_f  \rightarrow \Xi_f)$ and $j_+ = \coker(s: \Xi_f \rightarrow \Xi_f)$.

Now, we apply this general construction in the setting of our example. Let $\widetilde{X}$ and $U$ be as above (see \eqref{eq: base affine space is C2} and \eqref{eq: open B orbit U}), and let $f_\rho$ be the function\footnote{This choice of function corresponds to the deformation direction $\rho \in \h^*$, see Remarks \ref{rem: deformation direction algebraic} and \ref{rem: deformation direction geometric}.}. 
\begin{equation}
    \label{eq: our f}
    f_\rho: \widetilde{X} \rightarrow \mathbb{A}^1; (x_1, x_2) \mapsto x_2.
\end{equation}
For a variety $Y$, set 
\begin{equation}
    \label{eq: Ay}
    \mc{A}_Y:= \mc{D}_Y \otimes \C[s]/s^n.
\end{equation}
We will compute the maximal extension $\Xi_{\rho}\mc{O}_U:= \Xi_{f_\rho} \mc{O}_U$ of the structure sheaf $\mc{O}_U$ using the construction above, then describe the $\widetilde{\mc{U}}$-module structure on its global sections. To clarify the exposition, we list each step as a subsection. 

\subsubsection{Step 1: Deformation} 
\label{sec: step 1} Let $I^{(n)}$ be as in \eqref{eq: In}. The deformed version of $\mc{O}_U$ is 
\begin{equation}
    \label{eq: deformed OU}
    f^s \mc{O}_U^{(n)} = f^+I^{(n)} = \mc{O}_U \otimes_{f^{-1}(\mc{O}_{\mathbb{A}^1 - \{0\}})} f^{-1}(I^{(n)}). 
\end{equation}
The global sections of $f^s \mc{O}_U^{(n)}$ are 
\begin{equation}
\label{eq: v1 of pullback of O}
(\C[x_1, x_2, x_2^{-1}] \otimes \C[s]/s^n )t^s,
\end{equation}
where the differentials $\partial_1,\partial_2 \in \Gamma(\widetilde{X}, \mc{D}_{\widetilde{X}})$ act on the generator $t^s$ by  
\begin{equation}
    \label{eq: d2 acts on ts}
\partial_1 \cdot t^s = 0  \text{ and }  \partial_2 \cdot t^s = sx_2^{-1} t^s.
\end{equation}
 Alternatively, we can identify $f^s \mc{O}_U^{(n)}$ with a quotient of $\mc{A}_U$: 
\begin{equation}
\label{eq: identification of pullback with quotient}
    f^s \mc{O}_U ^{(n)} = \mc{A}_U / \mc{A}_U \langle \partial_1, x_2 \partial_2  - s\rangle.
\end{equation}
Both descriptions will be useful below.

\subsubsection{Step 2: $+$-pushforward} 
\label{sec: step 2} Because $j: U \hookrightarrow \widetilde{X}$ is an open embedding, the $\mc{D}_{\widetilde{X}}$-module  $j_+ f^s \mc{O}_U^{(n)}$ is the sheaf $f^s \mc{O}_U^{(n)}$ with $\mc{D}_{\widetilde{X}}$-module structure given by restriction to  $\mc{D}_{\widetilde{X}} \subset \mc{D}_U$. Under the identification \eqref{eq: identification of pullback with quotient}, we have
\begin{equation}
    j_+ f^s \mc{O}_U ^{(n)} = \mc{A}_U / \mc{A}_U \langle \partial_1, x_2 \partial_2 - s \rangle,
\end{equation}
with $\mc{D}_{\widetilde{X}}$-action given by left multiplication. 

It is interesting to examine the $\widetilde{\mc{U}}$-module structure on the global sections of this module. The operators $L_e, L_f, L_h$ and $R_h$ \eqref{eq: differential operators corresponding to e, f, h}, \eqref{eq: right action of h} act on the monomial basis elements  of \eqref{eq: v1 of pullback of O} by the following formulas 
\begin{align}
    \label{eq: action on deformed dual Vermas 1}
    L_e \cdot x_1^k x_2^\ell s^m t^s &= -k x_1^{k-1} x_2^{\ell+1} s^m t^s; \\
        \label{eq: action on deformed dual Vermas 2}
    L_f \cdot x_1^k x_2^\ell s^m t^s &= (-s -\ell) x_1^{k+1} x_2^{\ell-1} s^m t^s;\\
        \label{eq: action on deformed dual Vermas 3}
    L_h \cdot x_1^k x_2^\ell s^m t^s &= (s -k+\ell) x_1^k x_2^\ell s^m t^s;\\
        \label{eq: action on deformed dual Vermas 4}
    R_h \cdot x_1^k x_2^\ell s^m t^s &= (s +k + \ell) x_2^k x_2^\ell s^m t^s. 
\end{align}
The resulting $\widetilde{\mc{U}}$-module has a natural filtration given by powers of $s$, and it decomposes into a direct sum of submodules spanned by monomials $\{x_1^k x_2^\ell s^m t^s\}$ for fixed integers $k + \ell$. Each of these submodules has the structure of a deformed dual Verma module, as illustrated\footnote{We omit the generator $t^s$ and the arrows corresponding to the $R_h$-action in Figure \ref{fig: deformed dual verma} for clarity.} in Figure \ref{fig: deformed dual verma} for $k + \ell = 0$. 

\begin{figure}
    \centering
\makebox[\textwidth][c]{\includegraphics[width=1.2\textwidth]{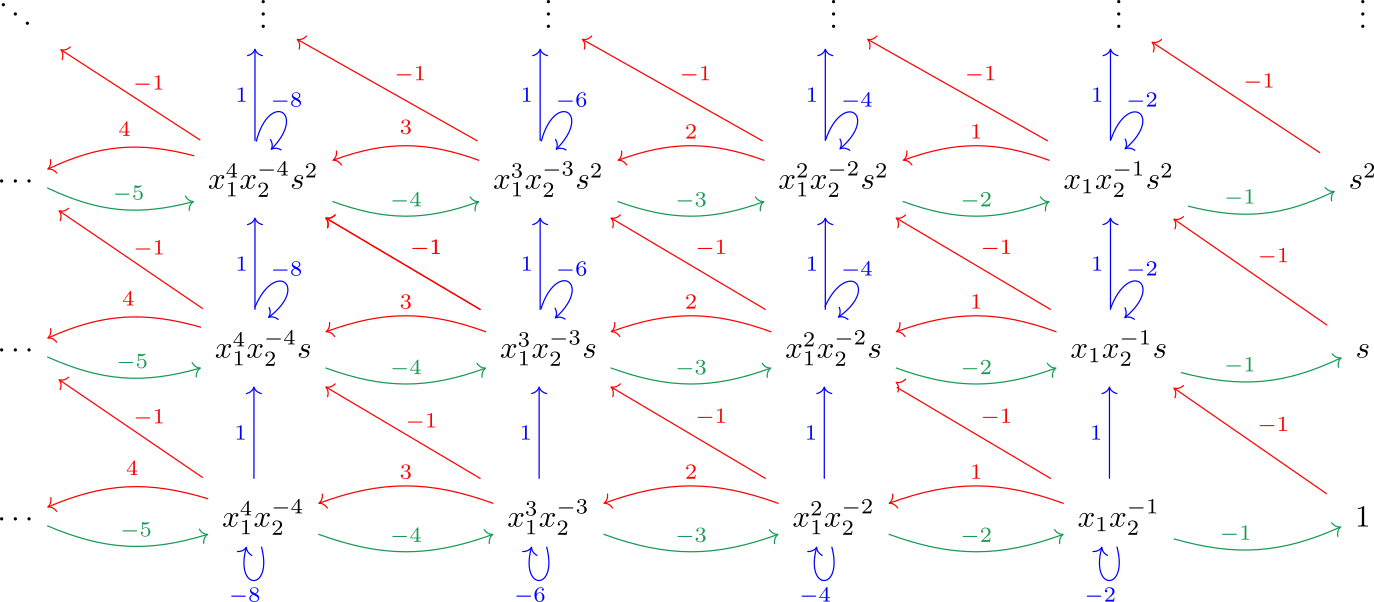}}
    \caption{Deformed dual Verma modules arise as global sections of $j_+f^s\mc{O}_U^{(n)}$.}
    \label{fig: deformed dual verma}
\end{figure}

Moreover, one can compute that the Casimir element $L_\Omega$ \eqref{eq: image of casimir under L} acts by
\begin{equation}
    \label{eq: Casimir action on deformed dual vermas}
    L_\Omega \cdot x_1^k x_2^\ell s^m t^s = \left( (k + \ell)^2 + 2(k + \ell) + 2s(1 + k + \ell) + s^2 \right) x_1^k x_2^\ell s^m t^s.
\end{equation}
Since $s$ is nilpotent, we can see from this computation that a high enough power of the operator
\begin{equation}
    \label{eq: generalised casimir action}
    L_\Omega - \gamma_{\mathrm{HC}}(k + \ell) = 2s(1 + k + \ell) + s^2
\end{equation}
annihilates any monomial basis element. (Here $\gamma_{\mathrm{HC}}$ is the Harish-Chandra projection in \eqref{eq: HC projection}.) Hence, the global sections of the submodules of $j_+ f^s \mc{O}_U ^{(n)}$ spanned by monomials $\{x_1^k x_2^\ell s^m t^s\}$ for fixed integers $k + \ell$  have generalised, but not strict, infinitesimal character. 

\subsubsection{Step 3: $!$-pushforward}
\label{sec: step 3} Recall that $j_! = \mathbb{D}_{\widetilde{X}} \circ j_+ \circ \mathbb{D}_U$, where $\mathbb{D}$ denotes holonomic duality, as in \eqref{eq: duality functor}. We begin by computing the right $\mc{D}_U$-module $\mathbb{D}_U f^s \mc{O}_U^{(n)}$ by taking a projective resolution of $f^s \mc{O}_U^{(n)}$ as a left $\mc{A}_U$-module. This is straightforward using the description \eqref{eq: v1 of pullback of O}. The complex
\begin{equation}
    \label{eq: projective resolution of fsO}
    0 \xrightarrow{d_2} \mc{A}_U \xrightarrow{d_1} \mc{A}_U \oplus \mc{A}_U \xrightarrow{d_0}  \mc{A}_U \xrightarrow{\epsilon} \mc{A}_U / \mc{A}_U \langle \partial_1, x_2 \partial_2 - s \rangle \rightarrow 0
\end{equation}
where $\epsilon$ is the canonical quotient map, $d_0$ sends $(\theta_1, \theta_2) \in \mc{A}_U \oplus \mc{A}_U$ to $\theta_1 \partial_1 - \theta_2 (x_2 \partial_2 -s)$, and $d_1$ sends $1 \mapsto (x_2 \partial_2 - s, \partial_1)$, is a free resolution of the left $\mc{A}_U$-module $f^s \mc{O}_U^{(n)}$. Applying the functor $\mathrm{Hom}_{\mc{A}_U}( - , \mc{A}_U)$ and making the natural identification    
\begin{equation}
\label{eq: hom identification}
    \mathrm{Hom}_{\mc{A}_U}(\mc{A}_U, \mc{A}_U) \simeq \mc{A}_U; 
    \varphi \mapsto \varphi(1),
\end{equation}
of right $\mc{A}_U$-modules, we see that 
\begin{equation}
    \label{eq: Ext}
\mathbb{D}_U f^s \mc{O}_U^{(n)} = \mathrm{Ext}_{\mc{A}_U}^2(f^s \mc{O}_U^{(n)}, \mc{A}_U) = \im d_1^* \backslash \ker d_2^*  = \langle \partial_1, x_2 \partial_2 - s \rangle \mc{A}_U \backslash \mc{A}_U.
\end{equation}
Here $d_i^* (\varphi) = \varphi \circ d_i$ for an appropriate homomorphism $\varphi$, and the right $\mc{A}_U$-module structure is given by right multiplication. 

To finish the computation of $j_! f^s \mc{O}_U^{(n)}$ we must take a projective resolution of this module. We do so following a similar process to the $!$-pushforward computation in Section \ref{sec: vermas and dual vermas}. Denote by $I$ the right ideal $\langle \partial_1, x_2 \partial_2 - s \rangle \mc{A}_U$ in $\mc{A}_U$. The complex 
\begin{equation}
    \label{eq: second proj resolution deformed}
    0 \leftarrow I \backslash \mc{A}_U \xleftarrow{\epsilon} \mc{A}_{\widetilde{X}} \xleftarrow{d_0} \mc{A}_{\widetilde{X}} \oplus \mc{A}_{\widetilde{X}} \xleftarrow{d_1} \mc{A}_{\widetilde{X}} \xleftarrow{d_2} 0
\end{equation}
with maps given by 
\begin{align}
    \label{eq: maps of second proj resolution deformed}
    \epsilon: 1 &\mapsto Ix_2^{-1}; \\
    d_0: (\theta_1, \theta_2) &\mapsto x_2 \partial_1 \theta_1 - (x_2^2 \partial_2 - x_2 s) \theta_2; \\
    d_1: 1 &\mapsto (x_2 \partial_2 - s, \partial_1)
\end{align}
is a free resolution of $\mathbb{D}_U f^s \mc{O}_U^{(n)}$ by right $\mc{A}_{\widetilde{X}}$-modules. Applying $\mathrm{Hom}_{\mc{A}_{\widetilde{X}}, r}( - , \mc{A}_{\widetilde{X}})$ and making the natural identifications as above, we obtain 
\begin{equation}
    \label{eq: shriek pushforward deformed}
    j_! f^s \mc{O}_U^{(n)} = \ker d_2^* / \im d_1^* = \mc{A}_{\widetilde{X}} / \mc{A}_{\widetilde{X}} \langle \partial_1, x_2 \partial_2 - s \rangle.
\end{equation}
The left $\mc{A}_{\widetilde{X}}$-module structure is given by left multiplication. 

Again, it is interesting to examine the $\widetilde{\mc{U}}$-module structure on the global sections of this module. The global sections of $j_! f^s \mc{O}_U^{(n)}$ are spanned by monomials $x_1^k x_2^{\ell} s^m$ for $k, \ell \geq 0$ and $0 \leq m < n$ and $x_1^a \partial_2^ b s^m$ for $a, b \geq 0$ and $0 \leq m < n$. For $\ell>0$, the $L_e, L_f, L_h$ and $R_h$-actions on the monomials $x_1^k x_2^{\ell} s^m$ are as in \eqref{eq: action on deformed dual Vermas 1} - \eqref{eq: action on deformed dual Vermas 4} (where we identify the generator $t^s$ of $j_+ f^s \mc{O}_U^{(n)}$ with the coset containing $1$ in $j_!f^s \mc{O}_U^{(n)}$), and the actions on the monomials $x_1^a \partial_2^ b s^m$ are given by the following formulas: 

\begin{align}
    \label{eq: action on deformed Vermas 1}
    L_e \cdot x_1^a \partial_2^ b s^m &= a(b - 1 - s) x_1^{a-1} \partial_2 ^{b-1} s^m; \\
       \label{eq: action on deformed Vermas 2}
    L_f \cdot x_1^a \partial_2^ b s^m &= -x_1^{a+1} \partial_2^{b+1} s^m;\\
       \label{eq: action on deformed Vermas 3}
    L_h \cdot x_1^a \partial_2^ b s^m &= (s - a - b) x_1^a \partial_2^b s^m;\\
       \label{eq: action on deformed Vermas 4}
    R_h \cdot x_1^a \partial_2^ b s^m &= (s+ a - b) x_1^a \partial_2^b s^m.
\end{align}
For $\ell =b= 0$, the actions of $L_h$ and $R_h$ are as in \eqref{eq: action on deformed Vermas 3}-\eqref{eq: action on deformed Vermas 4}, and the actions of $L_e$ and $L_f$ are given by 
\begin{align}
    \label{eq: deformed actions at 0}
    L_e \cdot x_1^k s^m &= -k x_1^{k-1} x_2 s^m; \\
    L_f \cdot x_1^k s^m &= -x_1^{k+1} \partial_2 s^m.
\end{align}
 As in \ref{sec: step 2}, this $\widetilde{\mc{U}}$-module has an $n$-step filtration by powers of $s$, and decomposes into a direct sum of $\widetilde{\mc{U}}$-submodules, each spanned by the set of monomials $\{x_1^k x_2^{\ell} s^m\}$ and $\{x_1^a \partial_2^ b s^m\}$ such that $k + \ell = a-b$ is a fixed integer. For $k + \ell = a-b=\lambda$, this submodule is isomorphic to a deformed Verma module of highest weight $\lambda$. We illustrate the module corresponding to $\lambda = 0$ in Figure \ref{fig: deformed verma}

  \begin{figure}
    \centering
\makebox[\textwidth][c]{\includegraphics[width=1.2\textwidth]{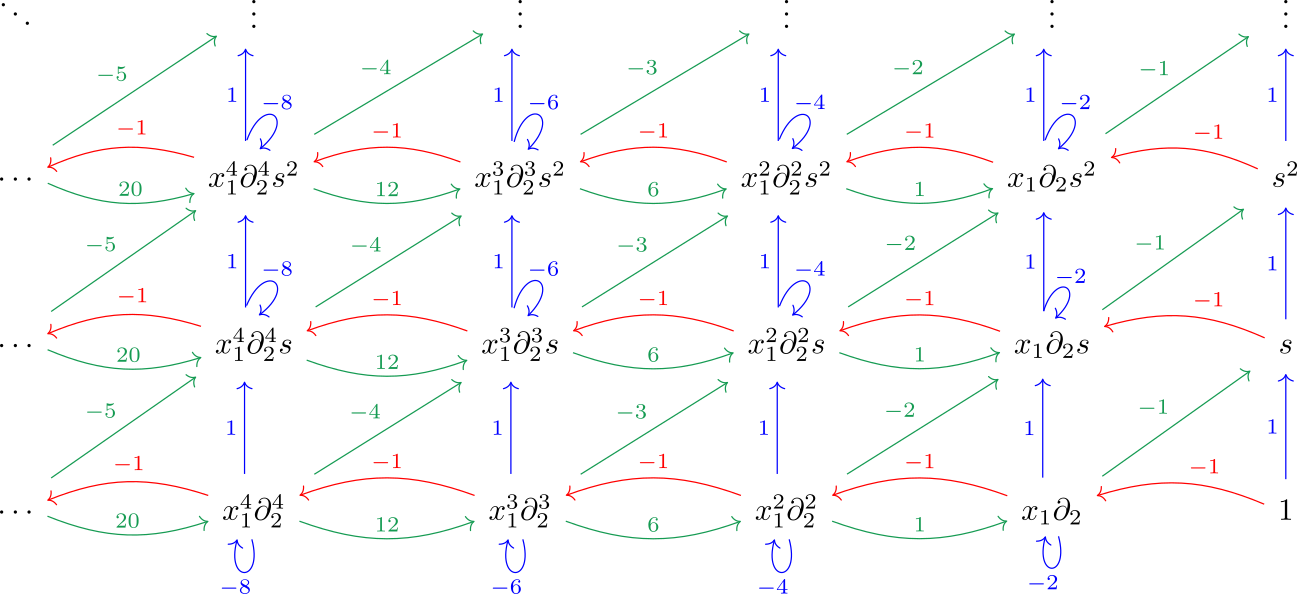}}
    \caption{Deformed Verma modules arise as global sections of $j_!f^s\mc{O}_U^{(n)}$.}
    \label{fig: deformed verma}
\end{figure}

\subsubsection{Step 4: Image of the canonical map} 
\label{sec: step 4} 
Set $I_U = \mc{A}_U \langle \partial_1, x_2 \partial_2 - s \rangle$ and $I_{\widetilde{X}} = \mc{A}_{\widetilde{X}} \langle \partial_1, x_2 \partial_2 - s \rangle$ to be the left ideals generated by the operators $\partial_1$ and $x_2 \partial_2 - s$ in $\mc{A}_U$ and $\mc{A}_{\widetilde{X}}$, respectively. The canonical map between $!$- and $+$-pushforward is given by 
\begin{align*}
    j_! f^s \mc{O}_U^{(n)} = \mc{A}_{\widetilde{X}} / I_{\widetilde{X}} &\xrightarrow{\mathrm{can}}  \mc{A}_U / I_U =j_+ f^s \mc{O}_U^{(n)} \\
    1I_{\widetilde{X}} &\longmapsto 1 I_U
\end{align*}
Since $1\mc{A}_{\widetilde{X}}$ generates $j_! f^s \mc{O}_U^{(n)}$ as a $\mc{A}_{\widetilde{X}}$-module, its image completely determines the morphism $\mathrm{can}$. On the monomial basis elements $x_1^k x_2^\ell s^m$ and $x_1^k \partial_2^\ell s^m$ of $j_! f^s \mc{O}_U^{(n)}$, the canonical map acts by 
\begin{equation}
\label{eq: canonical map on monomial basis}
x_1^k x_2^\ell s^m \xmapsto{\mathrm{can}} x_1^k x_2^\ell s^m \text{ and }
x_1^a \partial_2^b s^m \xmapsto{\mathrm{can}} s(s-1)\cdots (s-b+1) x_1^a x_2^{-b} s^m
\end{equation}
for $b>1$. For $b=1$, $x_1^a \partial_2 s^m \xmapsto{\mathrm{can}} sx_1^a x_2^{-1}$.  

The image of the morphism $\mathrm{can}$ is the $\mc{A}_{\widetilde{X}}$-submodule 
\begin{equation}
\label{eq: image of can}
\im (\mathrm{can}) = \mc{A}_{\widetilde{X}}/I_U \subset \mc{A}_U / I_U. 
\end{equation}
In the description of the global sections of $j_+ f^s \mc{O}_U^{(n)}$ in \eqref{eq: v1 of pullback of O}, the global sections of $\im (\mathrm{can})$ can be identified with 
\begin{equation}
    \label{eq: global sections of image of can}
    \left(\C[x_1, x_2] \otimes \C[s]/s^n + \C[x_1, x_2, x_2^{-1}] \otimes s \C[s]/s^n\right)t^s.
\end{equation}

\subsubsection{Step 5: The maximal extension}
\label{sec: step 5} Composing the canonical map $\mathrm{can}$ with $s$ gives 
\begin{equation}
\label{eq: s1n}
s^1(n): \mc{A}_{\widetilde{X}}/I_{\widetilde{X}} \xrightarrow{\mathrm{can}} \mc{A}_U/I_U \xrightarrow{s} \mc{A}_U/I_U.
\end{equation}
The global sections of the image of $s^1(n)$ (as a submodule of \eqref{eq: v1 of pullback of O}) are 
\begin{equation}
    \label{eq: global sections of image of s1n}
    \Gamma(\widetilde{X}, \im s^1(n)) \simeq \left( \C[x_1, x_2] \otimes s\C[s]/s^n + \C[x_1, x_2, x_2^{-1}] \otimes s^2 \C[s]/s^n \right)t^s. 
\end{equation}
This gives us an explicit description of $\Xi_\rho \mc{O}_U = \coker s^1(n)$:
\begin{align}
\label{eq: global sections of maximal extension}
    \Gamma(\widetilde{X}, \Xi_\rho \mc{O}_U) &= \left(\C[x_1, x_2, x_2^{-1}] \otimes \C[s]/s^n \right) t^s / \Gamma(\widetilde{X}, \im s^1(n)) \\
    \label{eq: global sections of maximal extension 2}
    &= \left( \C[x_1, x_2, x_2^{-1}] \otimes \C[s]/s^2 \right)t^s / \left( \C[x_1, x_2]\otimes s \C[s]/s^2 \right)t^s
\end{align}

\begin{figure}
    \centering
\makebox[\textwidth][c]{\includegraphics[width=1.2\textwidth]{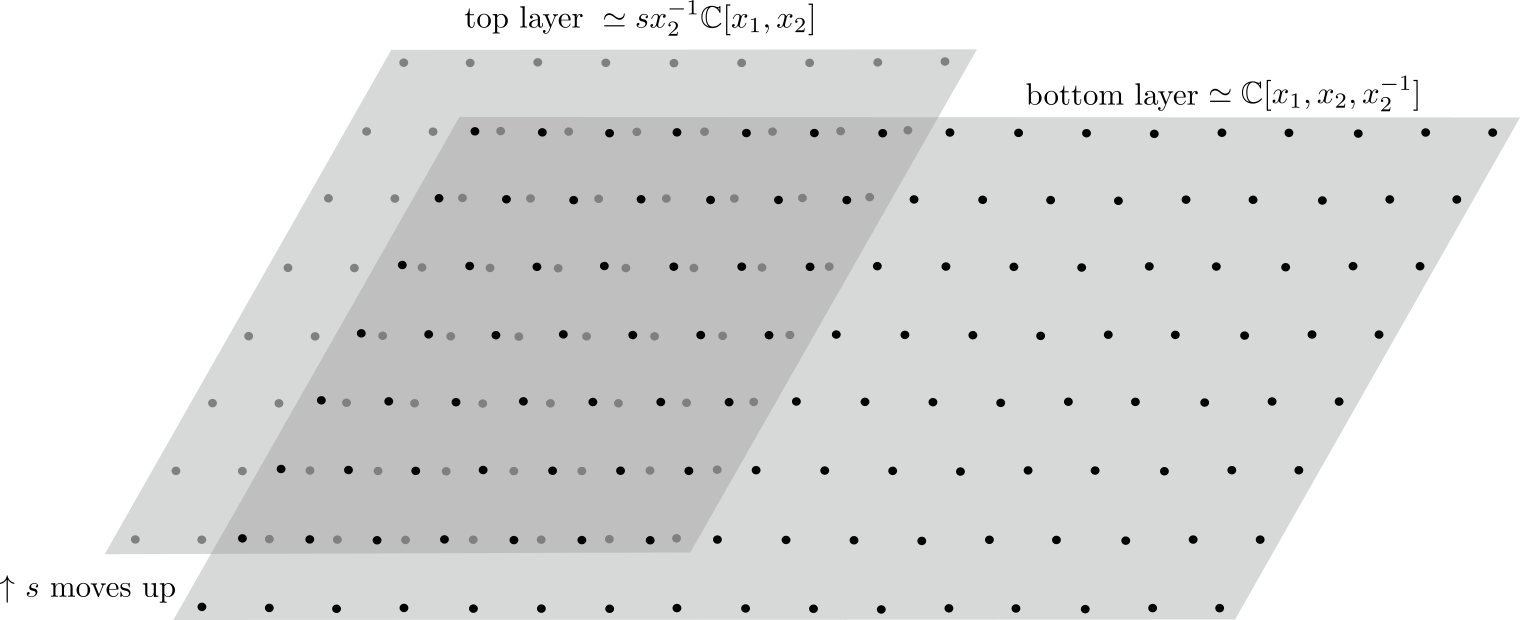}} 
    \caption{Caricature of the maximal extension $\Xi_\rho \mc{O}_U$. }
    \label{fig: layers of maximal extension}
\end{figure}

A caricature of the  $\Gamma(\widetilde{X}, \mc{A}_{\widetilde{X}})$-module \eqref{eq: global sections of maximal extension 2} is illustrated as in Figure \ref{fig: layers of maximal extension}. It has two layers, corresponding to the two non-zero powers of $s$, and action by $s$ moves layers up. As vector spaces, the bottom layer is isomorphic to $\C[x_1, x_2, x_2^{-1}]$ and the top layer to $sx_2^{-1} \C[x_1, x_2^{-1}]$.

Our final step is to examine the $\widetilde{\mc{U}}$-module structure on $\Gamma(\widetilde{X}, \Xi_\rho\mc{O}_U)$. The module \eqref{eq: global sections of maximal extension 2} has a basis given by monomials $x_1^k x_2^\ell t^s$ for $k \in \Z_{\geq 0}$ and $\ell \in \Z$ and $x_1^k x_2^\ell s t^s$ for $k \in \Z_{\geq 0}$ and $\ell \in \Z_{< 0}$. The actions of the operators $L_e, L_f, L_h$ and $R_h$ \eqref{eq: differential operators corresponding to e, f, h} on these monomials are given by applying the formulas \eqref{eq: action on deformed dual Vermas 1}- \eqref{eq: action on deformed dual Vermas 4} and taking the image of the resulting monomials in the quotient \eqref{eq: global sections of maximal extension 2}. 

The $\widetilde{\mc{U}}$-module $\Gamma(\widetilde{X}, \Xi_\rho \mc{O}_U)$ splits into a direct sum of submodules spanned by monomials $x_1^k x_2^\ell t^s$ and $x_1^k x_2^\ell s t^s$ such that $k + \ell$ is a fixed integer. We illustrate the submodule for $k + \ell = 0$ in Figure \ref{fig: maximal extension}\footnote{For clarity, we drop the generator $t^s$ from our notation in Figure \ref{fig: maximal extension}.}. If $\lambda\geq 0$, the submodule corresponding to the integer $\lambda = k + \ell$ has the Verma module of highest weight $\lambda$ as a submodule, and the dual Verma module corresponding to $\lambda$ as a quotient. As a $\mc{U}(\mf{g})$-module, it is isomorphic to the big projective module\footnote{The big projective module is the projective cover of the irreducible highest weight module $L(w_0 \lambda)$, where $w_0$ is the longest element of the Weyl group. It is the longest indecomposable projective object in the block $\mc{O}_\lambda$ of category $\mc{O}$ \cite[\S3.12]{BGGcatO}.} $P(w_0\lambda)$ in the corresponding block of category $\mc{O}$.  

\begin{figure}
    \centering
\makebox[\textwidth][c]{\includegraphics[width=1.2\textwidth]{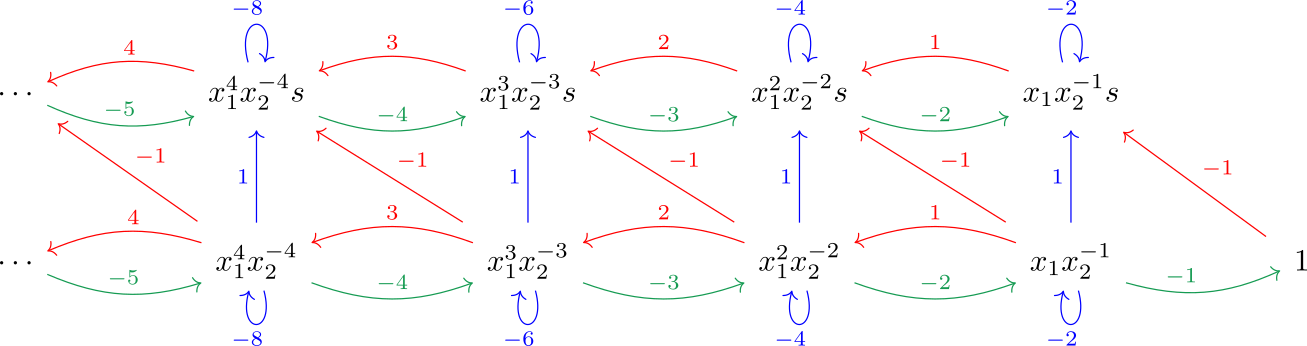}}    \caption{Big projective modules arise as global sections of slices of $\Xi_\rho \mc{O}_U$.}
    \label{fig: maximal extension}
\end{figure}

\subsection{The monodromy filtration and the geometric Jantzen filtration}
\label{sec: monodromy filtration and Jantzen filtration}

The maximal extension $\Xi_\rho \mc{O}_U$ naturally comes equipped with a nilpotent endomorphism $s$, giving it a corresponding monodromy filtration. This is the source of the geometric Jantzen filtrations on $j_!\mc{O}_U$ and $j_+\mc{O}_U$. In this section, we use the monodromy filtration on $\Xi_\rho \mc{O}_U$ to compute the geometric Jantzen filtration on $j_!\mc{O}_U$. Using the computations of Section \ref{sec: maximal extension}, we then describe the corresponding $\widetilde{\mc{U}}$-module filtration on global sections.

We begin by recalling monodromy filtrations in abelian categories, following \cite[\S1.6]{Del}. Given an object $A$ in an abelian category $\mc{A}$ and a nilpotent endomorphism $s: A \rightarrow A$, it follows from the Jacobson-Morosov theorem \cite[Proposition 1.6.1]{Del} that there exists a unique increasing exhaustive filtration $\mu^\bullet$ on $A$ such that $s\mu^n \subset \mu^{n-2}$, and for $k \in \mathbb{N}$, $s^k$ induces an isomorphism $\mathrm{gr}_\mu^k A \simeq \mathrm{gr}_\mu^{-k} A.$ This unique filtration is called the {\em monodromy filtration} of $A$.

Following Deligne's proof in \cite[\S1.6]{Del}, the monodromy filtration can be described explicitly in terms of powers of $s.$ Namely, if we set
\begin{equation}
    \label{eq: kernel filtration}
    \mathscr{K}^p A: = \begin{cases} \ker s^{p+1} & \text{for } p \geq 0; \\ 0, & \text{for } p<0 \end{cases}
\end{equation}
to be the kernel filtration of $A$ and 
\begin{equation}
    \label{eq: image filtration}
    \mathscr{I}^q A:= \begin{cases} \im s^q & \text{for } q>0; \\ A & \text{for } q \leq 0, \end{cases}
\end{equation}
to be the image filtration of $A$, then $\mu^\bullet$ is the convolution of the kernel and image filtrations; i.e.,
\begin{equation}
	\label{eq: monodromy filtration}
\mu^r = \sum_{p-q = r} \mathscr{K}^p \cap \mathscr{I}^q.
\end{equation}

The monodromy filtration $\mu^\bullet$ induces filtrations $J_!^\bullet$ on $\ker s$ and $J_+^\bullet$ on $\coker s$. By (\ref{eq: monodromy filtration}), these can be seen to be
\begin{equation}
    \label{eq: geometric Jantzen filtration} J_!^i = \ker s \cap \mathscr{I}^{-i} \text{ and } J_+^i = (\mathscr{K}^i + \im s) / \im s. 
\end{equation}
In the setting of holonomic $\mc{D}$-modules, the filtrations $J_!^\bullet$ and $J_+^\bullet$ define the geometric Jantzen filtrations. 
\begin{definition}
    \label{def: geometric Jantzen filtration} Let $Y$ be a smooth variety, $f$ a regular function on $Y$ and $U=f^{-1}(\mathbb{A}^1 - \{0\})$ as in \eqref{eq: open closed decomp}. For a holonomic $\mc{D}_U$-module $\mc{M}_U$, recall that $j_! \mc{M}_U = \ker(s: \Xi_f \mc{M}_U \rightarrow \Xi_f \mc{M}_U)$ and $j_+ \mc{M}_U= \coker(s: \Xi_f \mc{M}_U \rightarrow \Xi_f \mc{M}_U)$ \cite[Lemma 4.2.1]{BBJantzen}.  The filtrations $J_!^\bullet$ of $j_!\mc{M}_U$ and $J_+^\bullet$ of $j_+ \mc{M}_U$ are called the {\em geometric Jantzen filtrations}.
\end{definition}

Now we return to our running example. The monodromy filtration $\mu^\bullet$ on $\Xi_\rho \mc{O}_U$ is 
\begin{equation}
\label{eq: monodromy filtration on maximal extension}
   \mu^{-2} = 0 \subset \mu^{-1} = \im s \subset \mu^0 =  j_! \mc{O}_U \subset \mu^1  =\Xi_\rho \mc{O}_U.
\end{equation}
Restricting this to $\ker s = j_! \mc{O}_U$, we obtain the geometric Jantzen filtration of $j_! \mc{O}_U$:
\begin{equation}
    \label{eq: geometric Jantzen filtration of j!}
    0 \subset \im s \subset j_! \mc{O}_U.
\end{equation}
The induced filtration on $\coker s = j_+ \mc{O}_U$ gives the geometric Jantzen filtration on $j_+ \mc{O}_U$:
\begin{equation}
    0 \subset \ker s / \im s \subset j_+ \mc{O}_U. 
\end{equation}

\begin{remark}
    \label{rem: deformation direction geometric} (Geometric deformation direction) There are other choices of regular functions on $\widetilde{X}$ which we could have used in the construction of these filtrations. In particular, if $\gamma \in \h^*$ is dominant and integral such that $\gamma(h)=n$ for $n \in \Z_{> 0}$, then the function $f_{\gamma}:(x_1, x_2) \mapsto x_2^n$ can be used to define an intermediate extension functor $\Xi_{f_\gamma}$ and corresponding Jantzen filtrations.  Beilinson--Bernstein establish that all such $f_\gamma$ lead to the same filtration. For general Lie algebras $\mf{g}$, the construction can also be done for other choices of meromorphic functions on $\widetilde{X}$, but it is unclear geometrically whether these result in different filtrations \cite[\S4.3]{BBJantzen}. This is comparable to the dependence on deformation direction in the algebraic Jantzen filtration, see Remark \ref{rem: deformation direction algebraic}.
\end{remark}

Using the computations in Section \ref{sec: maximal extension}, we can examine the $\widetilde{\mc{U}}$-module filtrations that we obtain on global sections. Recall that $\Gamma(\widetilde{X}, \Xi_\rho \mc{O}_U)$ decomposes into a direct sum of submodules spanned by monomials $x_1^k x_2^\ell t^s$ and $x_1^k x_2^\ell s t^s$ such that $k + \ell$ is a fixed non-negative integer. Figure \ref{fig: maximal extension} illustrates the submodule corresponding to $k + \ell = 0$. Looking at this figure, it is clear that $\ker s = \mathrm{span}\{ x_1^k x_2^\ell s t^s, t^s \}$ is isomorphic to the Verma module of highest weight $0$, and $\coker s = \mathrm{span}\{x_1^k x_2^\ell t^s\}$ is isomorphic to the corresponding dual Verma module. Moreover, the global sections of the monodromy filtration on $\Xi_\rho \mc{O}_U$ restricted to the submodule corresponding to $k + \ell = \lambda$ is the composition series of the corresponding big projective module $P( w_0\lambda)$ when $\lambda \geq 0$. This is illustrated in Figure \ref{fig: global sections of monodromy} for $\lambda = 0$. We conclude that the filtrations on the Verma module $M(\lambda)$ and dual Verma module $I(\lambda)$ obtained by taking global sections of the geometric Jantzen filtrations are the composition series\footnote{This is an $\mf{sl}_2(\C)$ phenomenon. For larger groups this procedure will yield a filtration different from the composition series.}.

\begin{figure}
    \centering
\makebox[\textwidth][c]{\includegraphics[width=1.2\textwidth]{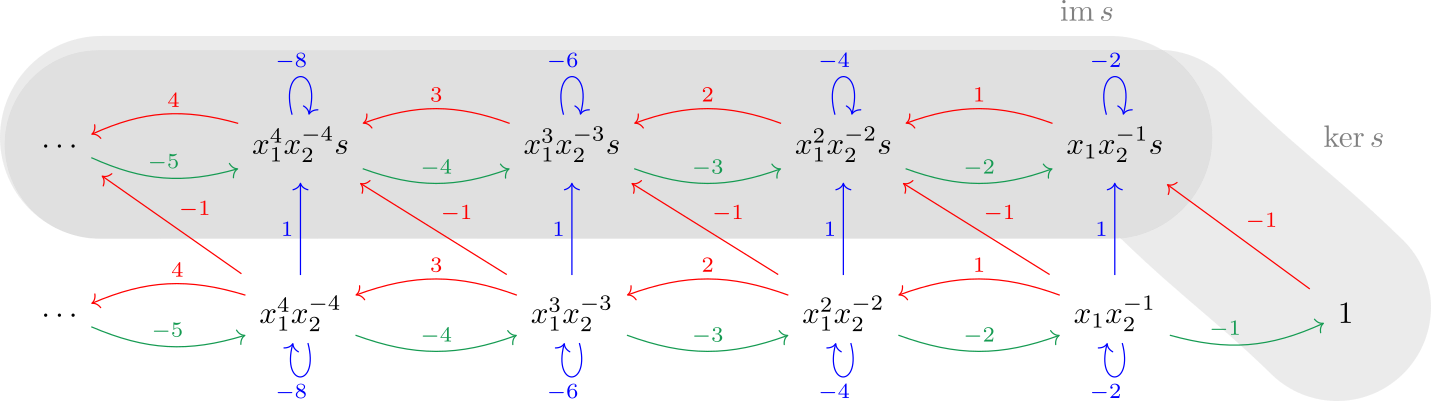}} 
    \caption{Global sections of the monodromy filtration on $\Xi_\rho \mc{O}_U$ are the composition series of the big projective module.}
    \label{fig: global sections of monodromy}
\end{figure}

\subsection{Relation to the algebraic Jantzen filtration}
\label{sec: relation to the algebraic Jantzen filtration}

The geometric Jantzen filtrations described above have an algebraic analogue, due to Jantzen \cite{Jantzen}. In this section, we recall the construction of the algebraic Jantzen filtration of a Verma module, then explain its  relation with the geometric construction in Section \ref{sec: monodromy filtration and Jantzen filtration}. 

\subsubsection{The algebraic Jantzen filtration}
\label{sec: the algebraic jantzen filtration}

We follow \cite{Soergel08}. Another nice reference for Jantzen filtrations is \cite{IK11}. 

Let $\mf{g}$ be a complex semisimple Lie algebra, $\mf{b}$ a fixed Borel subalgebra, $\mf{n} = [\mf{b}, \mf{b}]$ the nilpotent radical of $\mf{b}$, and $\mf{h}$ a Cartan subalgebra so that $\mf{b}= \mf{h} \oplus \mf{n}$. Denote by $\bar{\mf{b}}$ the opposite Borel subalgebra to $\mf{b}$. For $\lambda \in \mf{h}^*$, denote the Verma module of highest weight $\lambda$ by 
\begin{equation}
    \label{eq: Verma module def}
    M(\lambda) := \mc{U}(\mf{g}) \otimes_{\mc{U}(\mf{b})} \C_\lambda.
\end{equation}
Denote by $I(\lambda)$ the corresponding dual Verma module, defined to be the direct sum of weight spaces in
\begin{equation}
    \label{eq: dual verma def}
    \Hom_{\mc{U}(\overline{\mf{b}})} (\mc{U}(\mf{g}), \C_\lambda).
\end{equation}

Set $T = \mc{O}(\C\rho)$ to be the ring of regular functions on the line $\C \rho \subset \mf{h}^*$, where $\rho$ is the half sum of positive roots in the root system determined by $\mf{b}$. A choice of linear functional $s: \C \rho \rightarrow \C$ gives an isomorphism $T \simeq \C[s]$. Fix such an identification. Set $A:=T_{(s)}$ to be the local $\C$-algebra obtained from $T$ by inverting all polynomials not divisible by $s$, and 
\begin{equation}
    \label{eq: phi}
    \varphi: \mc{O}(\mf{h}^*) \rightarrow A
\end{equation} 
to be the composition of the restriction map $\mc{O}(\mf{h}^*) \rightarrow T$ with the inclusion $T \hookrightarrow A$. Note that under the identification $\mc{U}(\mf{h}) \simeq \mc{O}(\mf{h}^*)$, $\varphi(\mf{h}) \subseteq (s)$, the unique maximal ideal of $A$. 

Let $V$ be a $(\mf{g},A)$-bimodule on which the right and left actions of $\C$ agree. The {\em deformed weight space} $V^\mu$ of $V$ corresponding to a weight $\mu$ is the subspace 
\begin{equation}
    \label{eq: deformed weight space}
    V^\mu:= \{ v \in V \mid (h - \mu(h))v=v \varphi(h) \text{ for all }h \in \h \}.
\end{equation}
The direct sum of all deformed weight spaces of $V$ is a $(\mf{g},A)$-submodule of $V$ \cite[\S2.3]{Soergel08}. 

For $\lambda \in \mf{h}^*$, the {\em deformed Verma module} corresponding to $\lambda$ is the $(\mf{g},A)$-bimodule 
\begin{equation}
    \label{eq: deformed Verma definition}
    M_A(\lambda):= \mc{U}(\mf{g}) \otimes_{\mc{U}(\mf{b})} A_\lambda,
\end{equation}
where the $\mc{U}(\mf{b})$-module structure on $A_\lambda$ is given by extending the $\h$-action 
\begin{equation}
    \label{eq: h action in deformed verma}
    h \cdot a = (\lambda + \varphi)(h) a
\end{equation}
trivially to $\mf{b}$. Here $h \in \h$, $a \in A$, and $\varphi$ is as in \eqref{eq: phi}. The deformed Verma module $M_A(\lambda)$ is equal to the direct sum of its deformed weight spaces.

The {\em deformed dual Verma module} $I_A(\lambda)$ corresponding to $\lambda$ is the direct sum of deformed weight spaces in the $(\mf{g},A)$-bimodule
\begin{equation}
    \label{eq: deformed dual Verma definition}
     \Hom_{\mc{U}(\bar{\mf{b}})}(\mc{U}(\mf{g}), A_\lambda).
\end{equation}

There is a canonical isomorphism \cite[Proposition 2.12]{Soergel08}
\begin{equation}
    \label{eq: hom between verma and dual}
    \Hom_{(\mf{g},A) \mathrm{-mod}}(M_A(\lambda), I_A(\lambda)) \simeq A.
\end{equation}
Under this isomorphism, $1 \in A$ distinguishes a canonical $(\mf{g},A)$-bimodule homomorphism 
\begin{equation}
    \label{eq: canonical map algebraically}
    \psi_{A, \lambda}: M_{A}(\lambda) \rightarrow I_A(\lambda).
\end{equation}

For any $A$-module $M$, there is a descending $A$-module filtration $M^i:=s^iM$ with associated graded $gr^iM = M^i/M^{i+1}$. Hence there is a reduction map 
\begin{equation}
\label{eq: reduction map}
    \mathrm{red}: M \rightarrow gr^0M = M/sM.
\end{equation}

For $M_A(\lambda)$ and $I_A(\lambda)$, the layers of this filtration are $\g$-stable, so we obtain surjective $\g$-module homomorphisms 
\begin{equation}
    \label{eq: reduction map on vermas and dual vermas} 
    \mathrm{red}: M_A(\lambda) \rightarrow M(\lambda) = gr^0 M_A(\lambda) \text{ and } \mathrm{red}: I_A(\lambda) \rightarrow I(\lambda) = gr^0 I_A(\lambda).
\end{equation}
Pulling back the filtration above along the canonical map $\psi_{A, \lambda}$ \eqref{eq: canonical map algebraically} gives a $(\g, A)$-bimodule filtration of $M_A(\lambda)$.
\begin{definition}
    \label{def: algebraic Jantzen filtration}
    The {\em algebraic Jantzen filtration} of $M_A(\lambda)$ is the $(\g, A)$-bimodule filtration 
    \[
    M_A(\lambda)^i:= \left\{m \in M_A(\lambda) \mid \psi_{A, \lambda} (m) \in s^i I_A(\lambda) \right\},
    \]
    where $\psi_{A, \lambda}$ is the canonical map \eqref{eq: canonical map algebraically}. By applying the reduction map \eqref{eq: reduction map on vermas and dual vermas} to the filtration layers, we obtain a filtration $M(\lambda)^\bullet$ of $M(\lambda)$. 
\end{definition}

\subsubsection{Relationship between algebraic and geometric Jantzen filtrations}
\label{sec: relationship between algebraic and geometric Jantzen filtrations}

Though the constructions seem quite different at first glance, the geometric Jantzen filtration in Section \ref{sec: monodromy filtration and Jantzen filtration} aligns with the algebraic Jantzen filtration described in Section \ref{sec: the algebraic jantzen filtration} under the global sections functor. In this final section, we illustrate this relationship through our running example. 

Recall the canonical map \eqref{eq: canonical map}
\[
\mathrm{can}:j_! f^s \mc{O}_U^{(n)} \rightarrow j_+ f^s \mc{O}_U^{(n)}.
\]
As illustrated in Figures \ref{fig: deformed dual verma} and \ref{fig: deformed verma} , the global sections of $j_!f^s \mc{O}_U^{(n)}$ and $j_+ f^s \mc{O}_U^{(n)}$ decompose into direct sums of deformed dual Verma and Verma modules, respectively.\footnote{To be more precise, the submodules of $\Gamma(\widetilde{X}, j_!f^s \mc{O}_U^{(n)})$ and $\Gamma(\widetilde{X}, j_+ f^s \mc{O}_U^{(n)})$ corresponding to an integer $\lambda$ are truncated versions of $M_A(\lambda)$ \eqref{eq: deformed dual Verma definition} and $I_A(\lambda)$ \eqref{eq: deformed dual Verma definition} obtained by taking a quotient so that $s^n = 0$.}. The global sections of $\mathrm{can}$ are the direct sum of $\psi_{A,\lambda}$ \eqref{eq: canonical map algebraically} for all integral $\lambda$. There are two natural filtrations of $j_! \mc{O}_U$ which we have described using this set-up. 

\vspace{2mm}

\noindent
{\em Filtration 1:} (algebraic Jantzen filtration) 
\vspace{1mm}

We obtain a filtration of $j_! f^s \mc{O}_U^{(n)}$ by pulling back the ``powers of s'' filtration along $\mathrm{can}$. This induces a filtration on the quotient
\begin{equation}
    \label{eq: j! as a quotient}
    j_!(\mc{O}_U) \simeq j_! f^s \mc{O}_U^{(n)} / s j_! f^s \mc{O}_U^{(n)}.
\end{equation}
This is exactly the $\mc{D}$-module analogue of the algebraic Jantzen filtration described in Section \ref{sec: the algebraic jantzen filtration}. On global sections, it is the filtration 
\begin{equation}
    \label{eq: algebraic Jantzen on global sections}
    F^i \Gamma(\widetilde{X}, j_! \mc{O}_U) = \left\{ v \in \Gamma(\widetilde{X}, j_! \mc{O}_U) \mid \Gamma(\mathrm{can})(v) \in s^i \Gamma(\widetilde{X}, j_+ f^s \mc{O}_U^{(n)}) \right\}.
\end{equation}

\vspace{2mm}

\noindent
{\em Filtration 2:} (geometric Jantzen filtration) 

\vspace{1mm}

There is a unique monodromy filtration on $\Xi_\rho \mc{O}_U = \coker (s \circ \mathrm{can})$. Restricting this to the kernel of $s$, we obtain a filtration on 
\begin{equation}
\label{eq: j! as a sub}
j_! \mc{O}_U \simeq \ker (s: \Xi_\rho \mc{O}_U \rightarrow \Xi_\rho \mc{O}_U). 
\end{equation}
This is the geometric Jantzen filtration. It can be realized explicitly in terms of the image of powers of $s$ using \ref{eq: geometric Jantzen filtration}. On global sections, this gives 
\begin{equation}
    \label{eq: global sections of geometric Jantzen}
    G^i \Gamma(\widetilde{X}, j_! \mc{O}_U) = \left\{ w \in \ker (s \circlearrowright \Gamma( \widetilde{X}, \Xi_\rho \mc{O}_U)) \mid w \in s^i \Gamma( \widetilde{X}, \Xi_\rho \mc{O}_U) \right\}.
\end{equation}

It is helpful to see these filtrations in a picture. Figure \ref{fig: comparing filtrations}
 illustrates the set-up when restricted to the submodule corresponding to $\lambda = 0$. 

 \begin{figure}
     \centering
\makebox[\textwidth][c]{\includegraphics[width=1.2\textwidth]{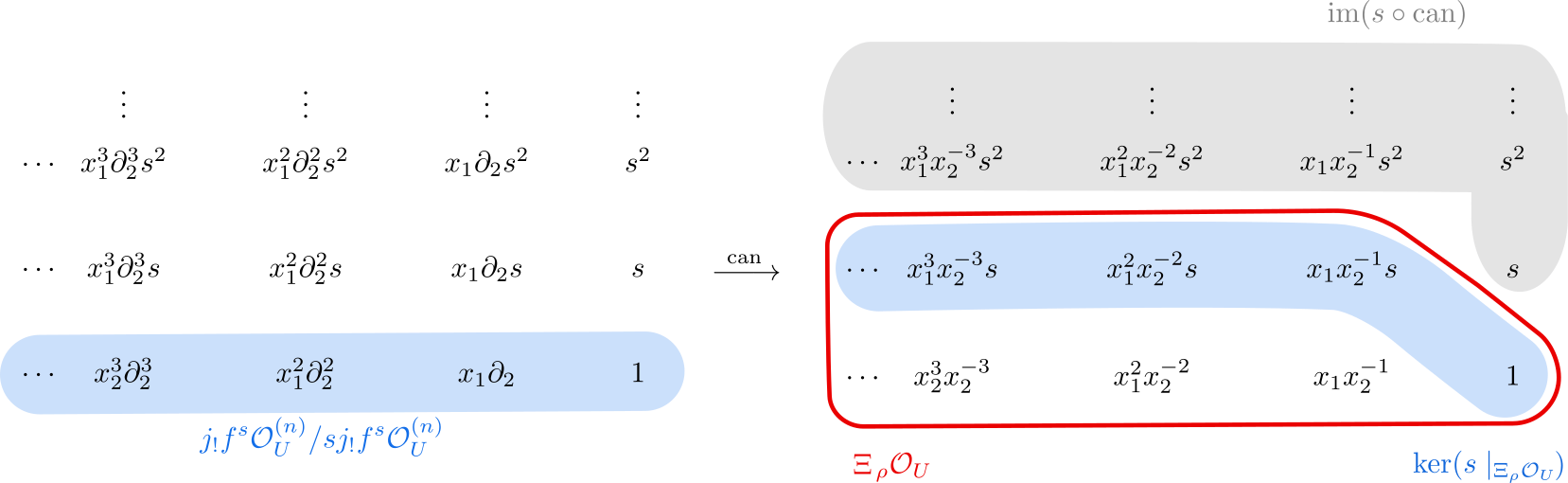}} 
     \caption{Relationship between the algebraic and geometric Jantzen filtrations}
     \label{fig: comparing filtrations}
 \end{figure}

 The map $\mathrm{can}$ is described on basis elements in \eqref{eq: canonical map on monomial basis}. Computing these actions for $\lambda = 0$, we see in Figure \ref{fig: comparing filtrations} that $\mathrm{can}$ fixes the right-most column and sends any other monomial on the left to a linear combination of monomials directly above the corresponding monomial on the right. The image of $s_1(n)=s \circ \mathrm{can}$ \eqref{eq: s1n} is highlighted in grey. The quotient by this image is the maximal extension, which is outlined in the red box. The quotient \ref{eq: j! as a quotient} is highlighted in blue in the left hand module, and the submodule \ref{eq: j! as a sub} is highlighted in blue in the right hand module. 

 We see that there are two copies of $j_! \mc{O}_U$ (each highlighted in blue in Figure \ref{fig: comparing filtrations}) in this set-up: one as a quotient of the left-hand module $j_! f^s \mc{O}_U^{(n)}$, and one as a submodule of a quotient of the right-hand module $j_+ f^s \mc{O}_U^{(n)}$. These two copies can be naturally identified as follows. 

 Because the submodule $s j_! f^s \mc{O}_U^{(n)}$ is in the kernel of the composition of $\mathrm{can}$ with the quotient $j_! f^s \mc{O}_U^{(n)} \rightarrow \coker( s \circ \mathrm{can}) = \Xi_\rho \mc{O}_U$, the map $\mathrm{can}$ descends to a map on the quotient: 
 \begin{equation}
     \label{eq: quotient of can}
     \overline{\mathrm{can}}: j_! \mc{O}_U \simeq j_! f^s \mc{O}_U ^{(n)} / s j_! f^s \mc{O}_U^{(n)} \rightarrow \Xi_\rho \mc{O}_U.
 \end{equation}
By construction, the map $\overline{\mathrm{can}}$ is injective. Its image is exactly $\ker(s: \Xi_\rho \mc{O}_U \rightarrow \Xi_\rho \mc{O}_U)$. This is immediately apparent in Figure \ref{fig: comparing filtrations}. Hence $\overline{\mathrm{can}}$ provides an explicit isomorphism which can be used to identify the two copies of $j_! \mc{O}_U$. Under this identification, the algebraic Jantzen filtration \ref{eq: algebraic Jantzen on global sections} and the geometric Jantzen filtration \ref{eq: global sections of geometric Jantzen} clearly align.

\bibliographystyle{alpha}

\bibliography{Jantzen}

\end{document}